\documentclass[12pt]{amsart}
\usepackage{graphicx}
\usepackage{amsmath}
\usepackage{amssymb}
\usepackage{setspace}
\marginparwidth -1cm \oddsidemargin 0cm \evensidemargin 0cm
\topmargin 0pt \textheight 226mm \textwidth 166mm

\vfuzz2pt 
\hfuzz2pt 
\newtheorem{thm}{Theorem}[section]

\newtheorem{lem}[thm]{Lemma}
\newtheorem{prop}[thm]{Proposition}
\theoremstyle{definition}

\theoremstyle{remark}
\newtheorem{rem}[thm]{Remark}
\numberwithin{equation}{section}
\begin{document}

\title[Minimal action odd solutions]{ Existence and axial symmetry of minimal odd solutions for 2-D Schr\"{o}dinger-Newton equation }
\author{Yang Zhang}
\address{School of Mathematics and Statistics, Central South University, Changsha 410075, People’s Republic of China}
\email{zhangyang@amss.ac.cn}
\begin{abstract}
We consider the following 2-D Schr\"{o}dinger-Newton equation  
\begin{eqnarray*}
\begin{cases}
-\Delta u+u=w|u|^{p-1}u  \\ 
-\Delta w=2 \pi |u|^p
\end{cases}\text{in} \; \mathbb{R}^2 
\end{eqnarray*}
for $ p \geq 2 $. Using variational method with the Cerami compactness property, we prove the existence of minimal action odd solutions. Also by carefully applying the method of
moving plane   to a similar but more complex equation on the upper half space, we prove these solutions are in fact axially symmetric. Our results partially can be seen
as the counterpart of results in paper \cite{GS} for the 2-D case, or the extension of the results \cite{CW} to the odd solution case.
\end{abstract}
\maketitle

{\small {\bf Keywords:} Logarithmic convolution potential, Cerami compactness, Method of moving plane, Schr\"{o}dinger-Newton equation.\\
{\bf 2010 MSC} Primary: 35J50; Secondary: 35Q40,35J20, 35B06.

\section{Introduction}
In this paper, we study the following  Schr\"{o}dinger-Newton equation in $\mathbb{R}^2 $
\begin{eqnarray}\label{eq1.1}
\begin{cases}
-\Delta u+u=w|u|^{p-1}u  \\ 
-\Delta w=2 \pi |u|^p
\end{cases}\text{in} \; \mathbb{R}^2    
\end{eqnarray}
for $  p \geq 2.$ It arises in many different physical models, see \cite{Pekar,P}. Depending on these different models, it could be given different names: Choquard or
Choquard-Pekar equation, Schr\"{o}dinger-Newton equation, or stationary Hartree equation. Here we call it Schr\"{o}dinger-Newton equation, based on the Penrose's 
model for Newton gravitation counpled with the quantum physics. In 3-D case, this equation has been widely studied, where the fundamental solution for $-\Delta$ is  $\frac{1}{|x|}$, a special Riesz potential of order $2$. First Lieb \cite{L} poved the  the existence and
uniquness of radial positive ground state solutions for $p=2$. Then Lions proved there are infinitely many radial solutions, see \cite{Lions}. For genneral $p$ and other results, see \cite{Acker1,Acker2,BJ,GS,MS2,R}. For the complete mathematical results,
we strongly recommand the impressive survey \cite{MS1} and the references therein, where the authors also listed many interesting open problems.
\par
In 2-D case, the analysis for this equation is harder, because of the sign-changing property of the $\log$ function, which is the fundamental solution of $\Delta$ in $\mathbb{R}^2$. First Choquard, Stubby and Vuffray proved there is a unique radial ground state solution by an ODE matheod for $p =2$, see \cite{CSV}. Then Stubby
established the variational framework and proved a stronger result using constraint minimization argument, see \cite{Stubbe}. Based on this variational framework, Cingolani and Weth \cite{CW}
discovered the energy functional or aciton functional $(p=2)$  satisfies the so-called Cerami compactness property, and used the minimax procedure to give the variational characterization 
of the ground state solution. They also show the symmetry of these solutions and other properties. Additionally they proved the  existence of infinitely many solutions of which the energies go to infinity and have many different types of 
symmetry in terms of group $G$, see also \cite{DW}. Later Cao, Dai and Zhang extended these resuts to the general $p \geq 2$ using the same method,
see \cite{DZ}. For the sharp decay
and non-degenerency, see \cite{BCS}.
\par
In \cite{GS}, the authors considered  the existence of the minimal action odd solutions and minimal action nodal solutions in $\mathbb{R}^3$. From the results in \cite{CW,DZ} for
 2-D case, we know there indeed exist odd solutions and nodal solutions. So the odd solutions set and nodal solutions set are not empty. The natural questions for us are 
 whether there is a minimal odd solution among all the odd solutions, and whether these minimal action odd solutions are axially symmetric. We will give these two questions a 
 firmative answer. Our results can be seen as the counterpart of \cite{GS} for the 2-D case, or can be seen as the extension of \cite{CW,DZ} to the odd solutions case.
\par
We consider the energy functional or actional functional by
$$ I(u)=\frac{1}{2} \int_{\mathbb{R}^2} \big( |\nabla u|^2  +   u^2 \big)dx +  \frac{1}{2p} \iint \log{|x-y|}|u|^p(x)|u|^p(y) dxdy   $$
defined on the functions space
$$ X=\left \{ u\in H^1(\mathbb{R}^2):\int_{\mathbb{R}^2}\log{(1+|x|)} |u|^p(x)dx < \infty        \right \}
:=H^1 \cap L^p(d \mu) ,$$
where the Radon measure is $ d \mu =\log{(1+|x|)} dx$. Formally,  the Schr\"{o}dinger-Newton equation is the corresponding Euler-Lagrange equation for this energy functional.
The properties of the actional functional and function space $X$ will be given below, see also \cite{CW,DZ}.
\par
Now, we define the odd function space
\begin{align*}
 X_{odd} &:=\bigg \{   u \in X \bigg | u(x_1,-x_2)=-u(x_1,x_2) \;\text{for almost every }\; x=(x_1,x_2) \in \mathbb{R}^2   \bigg \} \\
         &= H^1_{odd} \cap L^p(d \mu).
\end{align*}
The norm on $X_{odd}$ is defined by 
$$\|u\|_{X}:=\|u\|_{H^1} + \|u\|_{L^p(d \mu)}=\|u\|_{H^1} + \|u\|_*.$$
The odd Nehari manifold and corresponding minimum is defined by
\begin{align*}
\mathcal{N}_{odd}&:= \bigg \{  u \in X_{odd}: \left \langle  I^{\prime}(u),u  \right\rangle =0 , u\not \equiv 0 \bigg\}\\
       &=\mathcal{N}  \cap X_{odd},\\
c_{odd}&:= \inf\limits_{\mathcal{N}_{odd}} I(u),
\end{align*}
where $\mathcal{N} $ is the Nehari manfold $ \mathcal{N}= \big \{  u \in X: \left \langle  I^{\prime}(u),u  \right\rangle =0 , u\not \equiv 0 \big\} $.
Note that $\mathcal{N}$ is not empty, since we can always choose $u$ with $  \iint \log{|x-y|}|u|^p(x)|u|^p(y) dxdy <0 $, such that 
$\langle I^{\prime}(tu),tu  \rangle =0 $ for some $t>0$.
Also we define the odd ground state value by 
$$ c_{g,odd}:= \inf \bigg \{ I(u) : u \not\equiv 0, u \in X_{odd},I^{\prime}(u)=0  \bigg\} .$$
We shall call $c_{g,odd}$ 
the minimal action value, and the corresponding solutions are the minimal action odd solutions, if they exist.
The first minimax value is regularly defined on the function space $X_{odd}$ by
$$ c_{mm,odd}:=\inf\limits_{u \not \equiv 0} \sup\limits_{t >0} I(tu) .$$
Also the mountain pass value is  defined by
$$c_{mp,odd}:= \inf\limits_{\gamma \in \Gamma} \sup\limits_{t \in [0,1]} I \circ \gamma(t),$$
where $ \Gamma = \left\{  \gamma \in C([0,1],X_{odd}) : \gamma(0)=0,I\circ\gamma(1)<0 \right\} .$
\par
Our first result is the existence of minimal action odd solutions:
\begin{thm}
	Assume $ p\geq 2 .$ Then we have:
\begin{enumerate}
	\item $c_{mp,odd}>0$
	\item there exists an odd solution $u \in X_{odd}\setminus \{0\} ,$ such that $I(u)=c_{mp,odd}$;
	\item $ c_{g,odd}=c_{odd}=c_{mm,odd}=c_{mp,odd}; $
	\item $c_{g,odd}> c_g$ strictly, where $c_g$ is the ground state energy in $X$.
\end{enumerate}
\end{thm}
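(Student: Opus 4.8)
The plan is to prove the four statements in a logical order that mirrors the standard variational machinery for Choquard-type equations, adapted to the odd subspace and to the Cerami compactness property that (by the cited results of Cingolani--Weth and Cao--Dai--Zhang) the functional $I$ enjoys. First I would establish the mountain pass geometry on $X_{odd}$, which yields $c_{mp,odd}>0$. The key point is that for $u\in X_{odd}$ small in the $H^1$ norm, the quadratic term dominates and the logarithmic double integral, being of higher order, cannot destroy a uniform positive lower bound on small spheres; I would use the Hardy--Littlewood--Sobolev-type estimates for the logarithmic kernel on $X$ to control $\iint\log|x-y|\,|u|^p(x)|u|^p(y)\,dx\,dy$ by a power of $\|u\|_X$ strictly higher than $2$, giving a genuine mountain pass structure. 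Combined with the existence of paths $\gamma$ along which $I$ becomes negative (take any $u$ with negative interaction integral and scale $tu$ for large $t$), this produces $c_{mp,odd}>0$.

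Next I would prove the chain of equalities in (3). The inequalities $c_{mm,odd}\le c_{mp,odd}$ and $c_{odd}\le c_{mm,odd}$ are essentially formal: the fibering map $t\mapsto I(tu)$ has a unique positive maximum at some $t_u$ with $t_u u\in\mathcal{N}_{odd}$ (this uniqueness, which I must verify from the structure of $I$, is what makes the Nehari and minimax values coincide), and any ray through $u$ is an admissible mountain pass path after reaching the region where $I<0$. The reverse inequality $c_{mp,odd}\le c_{odd}$ follows because each Nehari point sits on such a ray. Finally $c_{g,odd}\le c_{odd}$ is trivial since ground states lie on $\mathcal{N}_{odd}$, and $c_{g,odd}\ge c_{mp,odd}$ will follow once (2) is proved, because the minimizer realizing $c_{mp,odd}$ is an actual critical point, hence an odd solution, so $c_{g,odd}\le c_{mp,odd}=c_{odd}\le c_{g,odd}$.

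For (2), the existence of a minimizer, I would take a minimizing sequence for $c_{mp,odd}$ and invoke the Cerami condition: the subspace $X_{odd}$ is invariant under $I$ and its gradient flow (oddness is preserved because the nonlinearity $w|u|^{p-1}u$ and the Newtonian coupling respect the reflection $x_2\mapsto -x_2$), so the deformation lemma applies within $X_{odd}$ and produces a Cerami sequence at level $c_{mp,odd}$. The compactness property then yields a convergent subsequence whose limit $u\in X_{odd}$ is a nonzero critical point with $I(u)=c_{mp,odd}$; the oddness of the limit is preserved under the weak/strong convergence afforded by the Cerami property. The main obstacle here is ensuring the limit is \emph{nonzero} and genuinely odd: one must rule out vanishing, which in the two-dimensional logarithmic setting requires care because translation invariance is broken by the measure $d\mu$ and by the oddness constraint itself; I expect to exploit the fact that $c_{mp,odd}>0$ together with a concentration argument compatible with reflection symmetry.

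The subtlest and most interesting statement is the strict inequality (4), $c_{g,odd}>c_g$. This is where I expect the real difficulty to lie, since it is not merely a soft consequence of the variational setup. The natural strategy is to argue by contradiction: if $c_{g,odd}=c_g$, then a minimal odd solution would also be a global ground state in $X$, but ground states in $X$ should (by symmetrization or by the uniqueness/symmetry results for the positive ground state cited from \cite{CW,DZ}) be of fixed sign up to translation, contradicting oddness, which forces a genuine sign change across the $x_1$-axis. To make this rigorous I would likely compare the action of an odd function with that of its ``half'' reflected and reassembled into a single-signed competitor, showing that the odd constraint strictly raises the minimal action because the interaction between the positive and negative lobes through the logarithmic kernel contributes a strictly positive defect; quantifying this strict gap, rather than merely a non-strict one, is the crux and will require a careful estimate of the cross term $\iint\log|x-y|\,|u|^p(x)|u|^p(y)$ when $u$ is split across the symmetry axis.
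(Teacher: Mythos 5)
Your proposal follows essentially the paper's own strategy: mountain-pass geometry on $X_{odd}$, a Cerami sequence produced by the quantitative mountain-pass lemma, the Cerami compactness property to extract a nonzero odd critical point, the fibering-map identification of the four levels, and a sign-based contradiction for (4). The one substantive misjudgment is your reading of (4). You first write down the correct argument and then set it aside as insufficiently rigorous; in fact it is complete, and the quantitative ``cross-term defect'' estimate you call the crux is unnecessary. The point is attainment: by (2) and (3) there exists an odd solution $u$ with $I(u)=c_{g,odd}$. If $c_{g,odd}=c_g$, this $u$ would be a ground state of $I$ on all of $X$; by the cited results \cite{CW,DZ} ground states have constant sign (hence, by the strong maximum principle, never vanish), while every function in $X_{odd}$ vanishes on the line $\{x_2=0\}$ --- a contradiction, giving the strict inequality. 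This two-line argument is the paper's entire proof of (4); no estimate of the interaction between the positive and negative lobes is needed, precisely because $c_{g,odd}$ is attained.

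Smaller repairs are needed elsewhere. In (3) your prose reverses two inequalities: since every nonzero critical point lies on $\mathcal{N}_{odd}$, the trivial inequality is $c_{odd}\le c_{g,odd}$ (not $c_{g,odd}\le c_{odd}$), and the existence of a critical point at level $c_{mp,odd}$ gives $c_{g,odd}\le c_{mp,odd}$ (not $\ge$); your concluding chain $c_{g,odd}\le c_{mp,odd}=c_{odd}\le c_{g,odd}$ is the correct one, so the justifications should be aligned with it. In (2), two points must be made explicit rather than gestured at: first, a critical point of $I$ restricted to $X_{odd}$ has to be upgraded to a critical point of $I$ on all of $X$ before it solves the equation --- the paper does this via Palais' principle of symmetric criticality, and your remark about gradient-flow invariance of $X_{odd}$ is not by itself a proof of this; second, the compactness of Cerami sequences holds only modulo $\mathbb{Z}^2$-translations (the paper's Lemma 3.3), so one must check that the translations can be chosen compatibly with the oddness (along the $x_1$-axis) in order for the limit to remain in $X_{odd}$ --- this is exactly the point your ``concentration argument compatible with reflection symmetry'' must deliver. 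Finally, in (1) you cannot bound the whole logarithmic term $V_0$ by a superquadratic power of $\|u\|_X$: the positive part $V_1$ grows logarithmically and admits no such bound. What you need is only the HLS bound $|V_2(u)|\le C\|u\|_{L^{4p/3}}^{2p}$ on the negative part, since $V_1\ge 0$ only helps; alternatively, the paper simply observes $c_{mp,odd}\ge c_{mp}>0$, where positivity of the unrestricted mountain-pass level is already known from \cite{CW,DZ}.
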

Our second result is the axial symmetry for all the minimal action odd solutions.
\begin{thm}
	If $u$ is a minimal action odd solution, then $u$ is positive or negative on the upper halfplane $ \mathbb{R}^2_{+} 
	=\left\{x=(x_1,x_2):x_2 >0\right\}.$ Moreover, $u$ is axially symmetric with respect to some axis perpendicular to $ \partial \mathbb{R}^2_+, $
	and $\frac{\partial u}{\partial x_1}<0 $ along the each array starting from the axis in $x_1$ direction.
\end{thm}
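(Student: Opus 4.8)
The plan is to first show that a minimal action odd solution $u$ cannot change sign on $\mathbb{R}^2_+$, and then to run a moving plane argument in the $x_1$-direction on the resulting half-space equation. For the sign, I would exploit that the action depends on $u$ only through $|\nabla u|^2+u^2$ and $|u|$: writing $x^{*}=(x_1,-x_2)$, define $v$ by $v=|u|$ on $\mathbb{R}^2_+$ and $v(x)=-v(x^{*})$ on $\mathbb{R}^2_-$. Then $v\in X_{odd}$, $|\nabla v|=|\nabla u|$ a.e., $v^2=u^2$ and $|v|=|u|$, so both the quadratic part and the logarithmic double integral are unchanged; hence $\langle I'(v),v\rangle=\langle I'(u),u\rangle=0$ and $I(v)=I(u)=c_{odd}$. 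Thus $v$ is again a minimizer over $\mathcal{N}_{odd}$, hence by Theorem~1.1 a nontrivial odd solution, with $v\geq0$ on $\mathbb{R}^2_+$. Elliptic regularity makes $v$ continuous, and since $v$ solves $-\Delta v+\big(1-w_v|v|^{p-1}\big)v=0$ with a locally bounded zeroth-order coefficient, the strong maximum principle forces $v>0$ in the connected set $\mathbb{R}^2_+$. Therefore $|u|>0$ there, and by continuity $u$ has constant sign on $\mathbb{R}^2_+$; I normalize to $u>0$.

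Next I rewrite the problem on $\mathbb{R}^2_+$. Using oddness ($|u|^p$ even in $x_2$) and the substitution $y\mapsto y^{*}$ on the lower half-plane, the potential becomes $w(x)=-\int_{\mathbb{R}^2_+}\big(\log|x-y|+\log|x-y^{*}|\big)u(y)^p\,dy$, so $u>0$ solves $-\Delta u+u=w\,u^p$ in $\mathbb{R}^2_+$ with $u=0$ on $\partial\mathbb{R}^2_+$. A preliminary but essential step is decay at infinity: since $w(x)\sim-2\log|x|\int_{\mathbb{R}^2_+}u^p\to-\infty$, the equation is uniformly coercive outside a large ball, which yields the exponential-type decay of $u$ and $\nabla u$ and the positivity of the zeroth-order coefficient there. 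I then set up the moving plane across $T_\lambda=\{x_1=\lambda\}$, with reflection $x^\lambda=(2\lambda-x_1,x_2)$, $u_\lambda(x)=u(x^\lambda)$ and $\Sigma_\lambda=\{x_1>\lambda\}\cap\mathbb{R}^2_+$. A short computation (the reflection is an isometry and maps the symmetrized kernel to itself) shows $u_\lambda$ solves $-\Delta u_\lambda+u_\lambda=w_\lambda u_\lambda^p$ with $w_\lambda=w[u_\lambda]$, so $W_\lambda:=u_\lambda-u$ satisfies $-\Delta W_\lambda+\big(1-p\,w_\lambda\xi^{p-1}\big)W_\lambda=(w_\lambda-w)u^p$ in $\Sigma_\lambda$, with $W_\lambda=0$ on $\partial\Sigma_\lambda$ and $W_\lambda\to0$ at infinity. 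The decisive algebraic fact, which rescues the method despite $-\log$ being sign-changing and non-decaying, is that for $x\in\Sigma_\lambda$ and $y\in\Sigma_\lambda^+:=\{y_1>\lambda\}\cap\mathbb{R}^2_+$ one has $|x-y|<|x-y^\lambda|$ and $|x-y^{*}|<|x-(y^\lambda)^{*}|$; together with the antisymmetry $f(y^\lambda)=-f(y)$ for $f:=u_\lambda^p-u^p$, this gives $w_\lambda(x)-w(x)=\int_{\Sigma_\lambda^+}G_\lambda(x,y)f(y)\,dy$ with a strictly positive kernel $G_\lambda$, and $f$ carries the sign of $W_\lambda$.

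The core of the argument, and the step I expect to be the main obstacle, is the nonlocal maximum principle needed to start and propagate the comparison $W_\lambda\geq0$. Testing the equation with $W_\lambda^{-}$ gives $\int_{\Sigma_\lambda}|\nabla W_\lambda^{-}|^2+\int_{\Sigma_\lambda}c_\lambda (W_\lambda^{-})^2=-\int_{\Sigma_\lambda}(w_\lambda-w)u^p\,W_\lambda^{-}$, where $c_\lambda=1-p\,w_\lambda\xi^{p-1}$. The contribution of the part of $f$ where $W_\lambda\ge0$ has the favorable sign, while the remaining term is a bilinear expression $\iint u(x)^p G_\lambda(x,y)\bar u(y)^{p-1}W_\lambda^{-}(x)W_\lambda^{-}(y)$; the difficulty is to absorb its logarithmically growing kernel $G_\lambda$ into the coercive left-hand side. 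For large $\lambda$ this follows from the smallness of $u^p$ on $\Sigma_\lambda$ (decay) and $c_\lambda\geq1$ there, forcing $W_\lambda^{-}\equiv0$ and initiating the procedure. Decreasing $\lambda$, I set $\lambda_0=\inf\{\lambda:W_\mu\geq0\text{ in }\Sigma_\mu\ \forall\mu\geq\lambda\}$; the same nonlocal estimate keeps the comparison set open and closed, and $\lambda_0>-\infty$ because monotonicity of $u$ in $x_1$ on all of $\mathbb{R}$ would contradict $u\to0$ as $x_1\to\pm\infty$. At $\lambda_0$, the strong maximum principle applied to $W_{\lambda_0}\geq0$ (using $G_{\lambda_0}>0$ to show the coupling cannot violate the sign) forces $W_{\lambda_0}\equiv0$, giving symmetry about $T_{\lambda_0}$, while for $\lambda>\lambda_0$ strict inequality $u<u_\lambda$ in $\Sigma_\lambda$ together with the Hopf lemma yields $\partial u/\partial x_1<0$ for $x_1>\lambda_0$. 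This establishes the axial symmetry and monotonicity claimed.
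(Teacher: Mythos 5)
Your proposal is correct and follows essentially the same route as the paper: constant sign by replacing $u$ with $|u|$ on the odd Nehari set and applying the strong maximum principle (the paper's Proposition 4.3), reduction to the half-plane equation with the doubled kernel $\log|x-y|+\log|x-y^{*}|$ (the paper's $H_1+H_2$), the reflected integral representation of $w_\lambda-w$ with positive kernel and antisymmetric density (the paper's Proposition 4.4), testing with the negative part and absorbing the nonlocal bilinear term via the exponential decay of $u$ (the paper implements exactly this by bounding the logarithmic kernel by a Riesz kernel and applying Hardy--Littlewood--Sobolev), and finally the continuation/dichotomy argument at the limiting position together with Hopf's lemma for the strict monotonicity. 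The one detail to repair is your coercivity claim $c_\lambda\ge 1$: your splitting places $w_\lambda(x)=w(x^\lambda)$ in the zeroth-order coefficient, and $x^\lambda$ may lie in the bounded region where $w>0$, so you should either use the symmetric splitting $w_\lambda u_\lambda^p-wu^p=w\,(u_\lambda^p-u^p)+(w_\lambda-w)\,u_\lambda^p$ so that the coefficient carries $w(x)\le 0$ on the far set $\Sigma_\lambda$ (this is what the paper's equation (4.2) does, with $H(x)$ at the base point), or observe that on $\Sigma_\lambda^-$ one has $\xi\le u\le\sup_{\Sigma_\lambda}u\to 0$ while $w$ is bounded above, whence $c_\lambda\ge \tfrac12$ for $\lambda$ large, which suffices.
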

The proof for this axial symmetry property is based on the method of moving plane. For this robust method, see \cite{CLO,DQ,GNN,LiZhu,MZ,WX}.
\section{Preliminaries}
In this section, we list some preliminaries for proving this two theorems, see the details in \cite{CW} for $p=2$ and \cite{DZ} for $p \geq 2$. We start from an elementary
but very useful inequality, which it's first used in proving the famous Brezis-Lieb lemma.
\begin{lem}[$\epsilon$-inequality]
	Let $0<p<\infty $ be a fixed number. For each given $\epsilon >0 ,$ there is a $C_{\epsilon}>0$, such that for all $a,b \in \mathbb{C}$, we have
	$$ \bigg| |a+b|^p-|b|^p  \bigg| \leq \epsilon |b|^p  + C_{\epsilon}|a|^p .  $$
\end{lem}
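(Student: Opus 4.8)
The plan is to exploit the positive $p$-homogeneity of the inequality to collapse the two complex variables into one. If we replace $(a,b)$ by $(\lambda a, \lambda b)$ with $\lambda>0$, both sides scale by $\lambda^p$, and the case $a=0$ is trivial since then the left side vanishes. So I would first reduce to $a\neq 0$, divide the desired inequality through by $|a|^p$, and set $w=b/a\in\mathbb{C}$, noting $|1+w|=|a+b|/|a|$ and $|w|=|b|/|a|$. This turns the claim into the single-variable statement that there is $C_\epsilon>0$ with $\big||1+w|^p-|w|^p\big|\le \epsilon|w|^p+C_\epsilon$ for every $w\in\mathbb{C}$.

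Next I would prove this reduced inequality by a compactness-plus-asymptotics argument. Define the continuous function $g(w):=\big||1+w|^p-|w|^p\big|-\epsilon|w|^p$ on $\mathbb{C}$; it suffices to show $g$ is bounded above, after which we may simply take $C_\epsilon:=\sup_w g(w)$ (which is finite and positive, since $g(0)=1$). Boundedness on any disk is immediate from continuity, so the whole point is the behaviour as $|w|\to\infty$. Here I would use the one-sided bound $\big||1+w|-|w|\big|\le 1$ together with an estimate on the increment of $s\mapsto s^p$: for $p\ge 1$ the mean value theorem gives $\big||1+w|^p-|w|^p\big|\le p(|w|+1)^{p-1}$, while for $0<p\le 1$ the subadditivity $(x+y)^p\le x^p+y^p$ yields $\big||1+w|^p-|w|^p\big|\le\big||1+w|-|w|\big|^p\le 1$. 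In either regime this bound is $o(|w|^p)$, so $g(w)=o(|w|^p)-\epsilon|w|^p\to-\infty$; a continuous function tending to $-\infty$ at infinity attains a finite maximum, which is exactly what is needed.

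As an alternative that produces an explicit constant for the range $p\ge 1$ relevant to the paper, I would instead keep both variables and start from the mean value estimate $\big||a+b|^p-|b|^p\big|\le p(|a|+|b|)^{p-1}|a|$, bound $(|a|+|b|)^{p-1}\le C(|a|^{p-1}+|b|^{p-1})$ by the elementary power inequality, and then apply Young's inequality with exponents $p$ and $p/(p-1)$ to the cross term $|a|\,|b|^{p-1}$, tuning the free parameter so that the coefficient of $|b|^p$ equals $\epsilon$ and absorbing everything else into the coefficient of $|a|^p$. Either route is routine, and I expect no genuine obstacle beyond a little care with the mean value step for $p<1$, where the derivative of $s\mapsto s^p$ blows up near the origin; this is precisely why the compactness argument handles the small-$|w|$ region by continuity rather than by differentiation, and why the subadditivity bound is the cleaner tool there.
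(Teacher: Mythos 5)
Your proposal is correct, but note that the paper itself does not prove this lemma at all: it simply cites Lieb--Loss \cite{LL}, where the inequality appears in the proof of the Brezis--Lieb lemma. So the comparison is really between your argument and the classical one behind that citation. Your second route (mean value estimate $\big||a+b|^p-|b|^p\big|\le p(|a|+|b|)^{p-1}|a|$, the elementary power inequality, then Young's inequality with exponents $p$ and $p/(p-1)$ to split the cross term) is essentially that classical argument, and it has the advantage of producing an explicit $C_\epsilon$ for the range $p\ge 2$ actually used in the paper. Your first route is genuinely different and arguably cleaner as a self-contained proof: the homogeneity reduction $w=b/a$ collapses the statement to a one-variable bound, and the compactness-plus-asymptotics step correctly handles all $0<p<\infty$ at once, with the subadditivity bound $\big||1+w|^p-|w|^p\big|\le 1$ covering $0<p\le 1$ where the mean value step would be delicate. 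All the individual estimates check out: the reduction is valid (the case $a=0$ is trivial), $g$ is continuous with $g(0)=1$ so $C_\epsilon:=\sup g$ is finite and positive once you show $g\to-\infty$ at infinity, and both asymptotic bounds are indeed $o(|w|^p)$. The trade-off is that the compactness argument is non-constructive (no explicit constant), while the Young-inequality route is constructive but requires the case split at $p=1$; for the purposes of this paper, where the lemma is only invoked qualitatively in the Cerami-sequence and moving-plane arguments, either version suffices.
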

The proof can be seen in \cite{LL}. We will apply this simple $\epsilon$-inequality in proving the strong convergence of Cerami sequence and the axial symmetry of minimal action odd solutions. We introduce the bilinear form by
\begin{align*}
B_1(f,g) &= \iint \log(1+|x-y|)f(x)g(y)dxdy,\\
B_2(f,g) &= \iint \log(1+\frac{1}{|x-y|})f(x)g(y)dxdy, \\
B_0(f,g)&= B_1(f,g)-B_2(f,g)=     \iint \log(|x-y|)f(x)g(y)dxdy
\end{align*}
and the corresponding functionals
\begin{align*}
 V_1(u) &=B_1(|u|^p,|u|^p)= \iint \log(1+|x-y|)|u|^p(x)|u|^p(y)dxdy, \\
V_2(u) &=B_2(|u|^p,|u|^p)= \iint \log(1+\frac{1}{|x-y|})|u|^p(x)|u|^p(y)dxdy, \\
V_0(u) &=B_0(|u|^p,|u|^p)= \iint \log(|x-y|)|u|^p(x)|u|^p(y)dxdy.
\end{align*}
By the HLS inequality, we can bound  $V_2(u)$ by:
$$  |V_2(u)| \leq C \||u|^p \|_{L^{\frac{4}{3}}}^2= C \|u \|_{L^{\frac{4p}{3}}}^{2p} .$$
Using these notations, we can rewrite the action functional in a compact form
\begin{align*}
 I(u) & =\frac{1}{2}\|u\|_{H^1}^2 + \frac{1}{2p}V_0(u) \\
      & = \frac{1}{2}\|u\|_{H^1}^2 + \frac{1}{2p} \left(  V_1(u)  - V_2(u)  \right) 
      \end{align*}
defined on the odd fucntion space $X_{odd}$ with the norm $\|u\|_X=\|u\|_{H^1}+\|u\|_* $.
\par
The next are the properties of the action functional and function space.
\begin{lem}
\begin{enumerate}
	\item The function space $X=H^1\cap L^p(d\mu) $  is compactly embedding in $L^s(\mathbb{R}^2)$ for all $s \in [p,\infty);$
	\item The functionals $V_1,V_2,V_0$ and $I$ is $C^1(X):$ for each $u, v $ in $X$, $i=0,1,2,$
	              $ \big \langle V_i^{\prime}(u),v \big \rangle = 2p B_i(|u|^p,|u|^{p-2}uv); $  
	\item    $V_1$ is weakly lower semicontiniuous on $H^1(\mathbb{R}^2)$;
	      $I$ is weakly lower semecontiniuous on $X$ and is lower semicontiniunous on $H^1.$         
\end{enumerate}
\end{lem}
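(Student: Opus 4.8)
The plan is to establish the three assertions in order, since the compactness in (1) is the engine that drives (2) and (3). For part (1), I would first record the continuous embedding: in dimension two $H^1(\mathbb{R}^2)$ embeds continuously into $L^s(\mathbb{R}^2)$ for every $s \in [2,\infty)$, and since $p \ge 2$ this already covers $s \in [p,\infty)$. The role of the weighted space is to upgrade this to a \emph{compact} embedding, which fails for $H^1$ alone on the full plane by translation invariance. Given a bounded sequence $(u_n) \subset X$, I pass to a subsequence with $u_n \rightharpoonup u$ weakly in $H^1$; on each ball $B_R$ the Rellich–Kondrachov theorem gives $u_n \to u$ strongly in $L^s(B_R)$. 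For the tails I exploit the logarithmic weight: on $\{|x|>R\}$ one has $\log(1+|x|) \ge \log(1+R)$, so
$$ \int_{|x|>R} |u_n|^p\,dx \le \frac{1}{\log(1+R)}\int_{\mathbb{R}^2}\log(1+|x|)|u_n|^p\,dx \le \frac{C}{\log(1+R)}, $$
which tends to $0$ uniformly in $n$. Interpolating this uniform $L^p$-smallness against a uniform $L^q$-bound for a large fixed $q>s$ (supplied by the $H^1$ embedding) controls the $L^s$-tail for every $s \in [p,\infty)$, and combining this with interior strong convergence yields $u_n \to u$ in $L^s(\mathbb{R}^2)$.

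For part (2), the essential estimate is the subadditivity $\log(1+|x-y|) \le \log(1+|x|)+\log(1+|y|)$, which bounds $V_1(u) \le 2\|u\|_*^p\|u\|_{L^p}^p$ and, more importantly, controls the bilinear form and its variations by the $X$-norm; the companion term is handled by the HLS bound already recorded, $|V_2(u)| \le C\|u\|_{L^{4p/3}}^{2p}$. To obtain the derivative formula I differentiate $t \mapsto V_i(u+tv)$ under the integral sign: the pointwise derivative of $|u+tv|^p$ is $p|u|^{p-2}uv$, and the symmetry of $B_i$ supplies the extra factor, giving $\langle V_i'(u),v\rangle = 2p\,B_i(|u|^p,|u|^{p-2}uv)$. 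Passage to the limit under the integral is justified by the $\epsilon$-inequality of the previous lemma, which dominates $\bigl||u+tv|^p-|u|^p\bigr|$ by $\epsilon|u|^p + C_\epsilon|v|^p$ uniformly for small $t$, so dominated convergence applies. Continuity of $u \mapsto V_i'(u)$ from $X$ into $X^*$, via the same two estimates together with the compactness of part (1), then upgrades Gateaux differentiability to $C^1$.

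For part (3), I split $I = \tfrac12\|u\|_{H^1}^2 + \tfrac{1}{2p}(V_1-V_2)$ and treat each piece by its nature. The squared $H^1$-norm is weakly lower semicontinuous as the norm of a Hilbert space. The term $V_2$ is in fact weakly \emph{continuous}: if $u_n \rightharpoonup u$ in $X$ then $u_n \to u$ in $L^{4p/3}$ by part (1), and the HLS bound forces $V_2(u_n) \to V_2(u)$, so $-V_2$ is weakly continuous, hence weakly lower semicontinuous. The term $V_1$ is only weakly lower semicontinuous, but this suffices: its integrand $\log(1+|x-y|)|u|^p(x)|u|^p(y)$ is nonnegative, and along a subsequence with a.e. convergence Fatou's lemma gives $V_1(u) \le \liminf V_1(u_n)$. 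This last argument needs only local strong convergence, so it already runs under weak $H^1$ convergence, giving the weak lower semicontinuity of $V_1$ on $H^1$ and, after summing the three contributions, the lower semicontinuity of $I$ on both $X$ and $H^1$.

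The main obstacle is part (1): the logarithmic weight grows so slowly that the tail estimate barely closes, and one must verify that the interpolation step genuinely transfers the uniform $L^p$-tail smallness to every exponent $s \in [p,\infty)$ while preserving the uniform bound. A secondary delicate point, in part (3), is keeping the weakly continuous part ($V_2$, via HLS and compactness) cleanly separated from the merely weakly lower semicontinuous part ($V_1$, via Fatou), since the sign of $V_1-V_2$ is indefinite and only this structural split makes the lower semicontinuity of $I$ go through.
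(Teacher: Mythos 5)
Your part (1) is essentially the paper's own proof: local compactness from Rellich--Kondrachov, the tail estimate $\int_{|x|>R}|u_n|^p\,dx\le C/\log(1+R)$ supplied by the logarithmic weight, and interpolation to pass from $L^p$ to every $L^s$ with $s\in[p,\infty)$. The only cosmetic difference is that the paper packages the conclusion through the Kolmogorov--Riesz--Fr\'echet criterion rather than extracting a weak limit and splitting into interior and tail. Keep in mind that the paper proves \emph{only} part (1), deferring (2) and (3) to \cite{CW} and \cite{DZ}, so your sketches of those parts stand on their own; your part (3) (weak lower semicontinuity of the $H^1$-norm, weak \emph{continuity} of $V_2$ via the compact embedding into $L^{4p/3}$ plus HLS, and Fatou on the nonnegative integrand of $V_1$) is the standard argument and is correct.

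There is, however, a genuine flaw in your part (2). You justify differentiation under the integral sign by invoking the $\epsilon$-inequality to dominate $\bigl||u+tv|^p-|u|^p\bigr|$ by $\epsilon|u|^p+C_\epsilon|v|^p$. But Gateaux differentiability requires a $t$-independent dominating function for the \emph{difference quotient} $t^{-1}\bigl(|u+tv|^p-|u|^p\bigr)$, and the bound you cite does not survive division by $t$: taking $a=tv$, $b=u$ in the paper's Lemma 2.1 gives $\epsilon|u|^p+C_\epsilon|t|^p|v|^p$, so after dividing by $|t|$ the first term becomes $\epsilon|u|^p/|t|$, which blows up as $t\to0$ for fixed $\epsilon$. (One could salvage this by choosing $\epsilon=|t|$, but only if one knows the growth of $C_\epsilon$ in $\epsilon$, which is not part of the lemma as stated.) The standard fix is the mean value theorem/convexity bound $\bigl||u+tv|^p-|u|^p\bigr|\le p\,|t|\,(|u|+|v|)^{p-1}|v|$ for $|t|\le1$, which dominates the quotient by $p(|u|+|v|)^{p-1}|v|$; its pairings under $B_1$ and $B_2$ are finite by exactly the subadditivity estimate $\log(1+|x-y|)\le\log(1+|x|)+\log(1+|y|)$ and the HLS bound you already recorded, so dominated convergence then goes through. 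With this substitution your part (2) closes; as written, the step ``so dominated convergence applies'' fails.
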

\begin{proof}
We only prove property (1). 
We have already known  the embeddding $H^1(\mathbb{R}^2)\hookrightarrow L^r(\mathbb{R}^2)(dx)$ for $ 2\leq r <\infty $ is  locally compact by the Rellich-Kondrachov compactness theorem.
Now by the Kolmogrov-M.Reize-Frechet compactness criteria, see \cite{B}, we only need to check the uniformly integralbility  at infinity for
$(u_n) \subset X$ bounded.
Notice for each $\epsilon >0,$ choose $R$ large enough, then we have
$$ M \geq \int_{|x|\geq R} \log (1+|x|)|u_n|^p dx \geq \int_{|x|\geq R}\log (1+R)|u_n|^p dx ,$$
$$ \int_{|x| \geq R}|u_n|^p dx \leq \frac{M}{\log (1+R)} \leq \epsilon, \qquad \text{for all $R$ large enough}, $$
yielding the uniformly integralbility. Hence the embedding $X\hookrightarrow \hookrightarrow L^p $ is compact. By the Gagliardo-Nirenberg interpolation inequality,
for all $ s \in [p, \infty) $, the embedding is also compact.
\end{proof}
The properties of the solutions are listed in the following lemma.
\begin{lem}\label{lem2.3}
\begin{enumerate}
	\item If $u $ is the critical point of the energy functional, then $u$ is the weak solution of  the following Euler-Lagrange equation: 
$$  -\Delta u+u+ (\log |\cdot| * |u|^p) {|u|}^{p-2}u =0  .$$
	\item  The potential function defined by $ w(x):=\int_{\mathbb{R}^2} \log|x-y| |u|^p(y)dy $
	is of class $C^3$,  hence $ -\Delta w = 2\pi   |u|^p $ classically. Moreover, we have
	$ w(x)-  \log|x| \int_{\mathbb{R}^2} |u|^p \longrightarrow 0$, as $ x \to \infty $, and $ |\nabla w | \to 0  $ as $ x \to \infty ;$
	\item  $u$  decay  exponentially : for any $\epsilon >0$, there is a $C_{\epsilon}>0, $
	such that: $$|u(x)|\leq C_{\epsilon} \exp^{-(1-\epsilon)|x|}; $$
	\item  $ u $ is $ W^{2,r}(\mathbb{R}^2) $, for every $r \in (1,\infty)$,  hence is the strong solution of the Euler-Lagrange equation, in fact $ u \in C^{2,\alpha}_{loc}.$
\end{enumerate}
\end{lem}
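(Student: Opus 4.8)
The plan is to treat the four assertions as a single elliptic bootstrap, starting from the variational identity and feeding each gain in regularity or decay into the next step. For (1), since Lemma 2.2 gives $I\in C^1(X)$ with $\langle V_0'(u),v\rangle = 2p\,B_0(|u|^p,|u|^{p-2}uv)$, a critical point $u$ satisfies, for every $v\in X$,
\[
0=\langle I'(u),v\rangle=\int_{\mathbb{R}^2}\big(\nabla u\cdot\nabla v+uv\big)\,dx+\int_{\mathbb{R}^2}\Big(\int_{\mathbb{R}^2}\log|x-y|\,|u|^p(x)\,dx\Big)|u|^{p-2}u\,v\,dy.
\]
The inner integral is exactly $w(y)=(\log|\cdot|*|u|^p)(y)$, so this is the weak form of $-\Delta u+u+(\log|\cdot|*|u|^p)|u|^{p-2}u=0$; density of test functions closes the argument. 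This step is routine.

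Next I would establish local regularity, i.e.\ the local part of (4) and the $C^3$ claim in (2). Splitting $w=w_1-w_2$ with $w_1=\log(1+|\cdot|)*|u|^p$ and $w_2=\log(1+\tfrac1{|\cdot|})*|u|^p$, the subadditivity $\log(1+|x-y|)\le\log(1+|x|)+\log(1+|y|)$ together with $|u|^p\in L^1(d\mu)$ shows $w_1$ is finite with at most logarithmic growth, while the HLS bound of Section 2 makes $w_2$ bounded and continuous; hence $w$ is continuous and locally bounded. Rewriting the equation as $-\Delta u=-(1+w|u|^{p-2})u$ and using $u\in L^s(\mathbb{R}^2)$ for all $s<\infty$ (Lemma 2.2(1)), the right-hand side lies in $L^r_{loc}$ for every $r$, so Calder\'on--Zygmund estimates give $u\in W^{2,r}_{loc}$ and Morrey's embedding $u\in C^{1,\alpha}_{loc}$; the right-hand side is then H\"older continuous and Schauder theory yields $u\in C^{2,\alpha}_{loc}$. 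With $p\ge2$ this makes $|u|^p=(u^2)^{p/2}$ of class $C^{1,\gamma}_{loc}$, and the classical logarithmic potential estimates (differentiating $\nabla_x\log|x-y|=(x-y)/|x-y|^2$, which is locally integrable in $\mathbb{R}^2$, and handling the second derivative by the logarithmic potential identity) give $w\in C^3$ with $\Delta w=2\pi|u|^p$ in the classical sense.

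For the exponential decay (3) I would argue by comparison. A direct splitting of $w(x)=\int_{|y|\le|x|/2}+\int_{|y|>|x|/2}$, using that $\log|x-y|\ge\log(|x|/2)>0$ on the first region and that $|u|^p$ has small tails, shows $w(x)>0$ for all large $|x|$, say $|x|\ge R$. On $\{|x|>R\}$ the equation becomes $-\Delta u+Vu=0$ with $V:=1+w|u|^{p-2}\ge1$. Fixing $\epsilon\in(0,1)$, the barrier $\phi=Ce^{-(1-\epsilon)|x|}$ satisfies $-\Delta\phi+\phi=\big(1-(1-\epsilon)^2+(1-\epsilon)/|x|\big)\phi>0$ for large $|x|$, hence $-\Delta\phi+V\phi\ge-\Delta\phi+\phi>0$. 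Applying the maximum principle for $-\Delta+V$ to $\phi\mp u$ on the exterior domain, with $C$ chosen so that $\phi\ge|u|$ on $\{|x|=R\}$ and using $u\to0$ at infinity, gives $|u|\le\phi$, which is the claim. This also upgrades the earlier local bounds to the global $u\in W^{2,r}(\mathbb{R}^2)$ via interior estimates on unit balls.

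Finally, the asymptotics in (2) follow from the decay: writing $w(x)-\log|x|\int_{\mathbb{R}^2}|u|^p=\int_{\mathbb{R}^2}\log\tfrac{|x-y|}{|x|}\,|u|^p(y)\,dy$ and $\nabla w(x)=\int_{\mathbb{R}^2}\tfrac{x-y}{|x-y|^2}|u|^p(y)\,dy$, the exponential decay of $|u|^p$ dominates both integrands and forces them to $0$ as $|x|\to\infty$ by dominated convergence. I expect the main obstacle to be the decay step (3): the solution is sign-changing (odd) while the potential $w$ grows like $\log|x|$ and is itself sign-indefinite, so the comparison must be set up carefully, first pinning down that $V\ge1$ far out and that $u\to0$, and then applying the maximum principle to $u$ and $-u$ separately. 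The interdependence --- that the global $W^{2,r}$ bound and the clean asymptotics of $w$ both rely on the decay, which in turn needs $w\ge0$ at infinity --- is what dictates the order above.
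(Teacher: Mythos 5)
The first thing to know is that the paper never proves Lemma 2.3: it is listed among the preliminaries and deferred entirely to \cite{CW} (for $p=2$) and \cite{DZ} (for $p\ge2$), so there is no internal argument to compare with. What your bootstrap reconstructs is, in substance, the standard route of those references (weak Euler--Lagrange identity, splitting of the potential into its $\log(1+|x-y|)$ and $\log(1+\tfrac{1}{|x-y|})$ parts, Calder\'on--Zygmund and Schauder bootstrap, comparison functions for the decay, then the asymptotics of $w$), and your plan, including the order in which the four statements must be established, is correct. Three executions are looser than they should be, though all are repairable by standard means. (i) The pointwise bound on $w_2=\log(1+\tfrac1{|\cdot|})*|u|^p$ does not follow from the HLS estimate of Section 2, which controls the double integral $V_2(u)$, not a pointwise value; instead use H\"older on $\{|x-y|\le1\}$ with $\log(1+\tfrac1{|z|})\in L^{q'}(B_1)$ and the bound $\log(1+\tfrac1{|z|})\le1$ outside, together with $|u|^p\in L^1\cap L^q$. (ii) Your comparison argument needs $u(x)\to0$ as $|x|\to\infty$ before the maximum principle can be invoked; this does not come for free from $u\in W^{2,r}_{loc}$, but from uniform interior estimates on balls $B_1(x_0)$ whose right-hand sides tend to $0$ as $|x_0|\to\infty$, using that $\log(1+|x_0|)\,\|u\|^p_{L^p(B_2(x_0))}\to0$ (a consequence of $u\in L^p(d\mu)$), so the decay of the local norms beats the logarithmic growth of $w$. (iii) The final ``dominated convergence'' is not literal, since the singularity of $\log\frac{|x-y|}{|x|}$ moves with $x$; split into $\{|y|\le|x|/2\}$, $\{|x-y|\le1\}$ and the remainder, which the exponential decay of $u$ handles. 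Finally, note that what you (correctly) obtain is $\Delta w=2\pi|u|^p$ for $w=\log|\cdot|*|u|^p$, since $\Delta\log|x|=2\pi\delta_0$ in $\mathbb{R}^2$; the sign in the lemma's ``$-\Delta w=2\pi|u|^p$'' is a slip carried over from system (1.1), where the potential is the negative of this $w$.
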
	
\begin{rem}
 By Lemma \ref{lem2.3} of property (2), if $u \in H^1_0(\mathbb{R}^2_+) \cap L^p(d \mu) \subset X, $ then we have
$$ w(x)-  \log|x| \int_{\mathbb{R}^2_+} |u|^p\longrightarrow 0, \qquad \text{as} \; x \longrightarrow \infty  .$$
Now for $u \in X_{odd}=H^1_{odd} \cap L^p(d \mu) ,$  we have another asymptotics:
$$ w(x)-  \log|x| \int_{\mathbb{R}^2} |u|^p \longrightarrow 0  .$$
But by the odd symmetry, we have 
\begin{align*}
w(x) &=\int_{\mathbb{R}^2} \log|x-y| |u|^p(y)dy \\
     &= \frac{1}{2} \int_{\mathbb{R}^2}  \bigg( \log \big[(x_1 -y_1)^2 +(x_2-y_2)^2\big]   \bigg) |u|^p(y)dy\\
     &= \frac{1}{2}   \int_{\mathbb{R}^2_+}  \bigg( \log   \big( |x -y|^2  \big) \bigg) |u|^p(y)dy  +  
         \frac{1}{2} \int_{\mathbb{R}^2_+}  \bigg( \log \big[(x_1 -y_1)^2 +(x_2+y_2)^2\big]   \bigg) |u|^p(y)dy.
\end{align*}
Combining these two asymptotics, we have
$$ \frac{1}{2} \int_{\mathbb{R}^2_+}  \bigg( \log \big[(x_1 -y_1)^2 +(x_2+y_2)^2\big]   \bigg) |u|^p(y)dy  -  \log|x| \int_{\mathbb{R}^2_+} |u|^p \longrightarrow 0,
 \qquad \text{as} \; x \longrightarrow \infty .$$
 Here, we view $u \in X_{odd}$ defined on the upper halfspace. We will use this asymptotics in the proof of axial symmetry.
\end{rem}
The following is the general Mountain Pass Lemma for Cerami sequence, see in \cite{LW}.
\begin{lem}\label{lem2.5}
Assume $X$ is Banach space, $M$ is a metric space, $M_0 \subset M $ is a closed subspace, $\Gamma_0 \subset C(M_0;X) $. Define
$$\Gamma:= \big\{ \gamma \in  C(M;X): \gamma \big|_{M_0} \in \Gamma_0      \big\} .$$
If $I \in C^1(X;\mathbb{R})$ satisfies 
$$ \infty > c:= \inf\limits_{\gamma \in  \Gamma} \sup\limits_{t \in M} I(\gamma(t)) >a:=
\inf\limits_{\gamma_0 \in  \Gamma_0} \sup\limits_{t \in M_0} I(\gamma_0(t)),  $$
then for each $ \epsilon \in (0,\frac{c-a}{2}), \delta >0, \gamma \in \Gamma $ with  $\sup\limits_{t \in M} I(\gamma(t)) \leq c+\epsilon ,$ 
there exists a $u \in X$ such that
\begin{enumerate}
	\item  $c-2\epsilon \leq I(u) \leq c+2\epsilon ;$
	\item $ dist(u,\gamma(M)) \leq 2\delta ;$
	\item $ \|I^{\prime}(u)\| _{X^{\prime}} (1+\|u\|_{X})\leq \frac{8\epsilon}{\delta}. $
\end{enumerate}
\end{lem}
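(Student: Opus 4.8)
The plan is to prove the lemma by a quantitative form of Ekeland's variational principle applied not to $I$ on $X$ but to the \emph{path functional} on the admissible class $\Gamma$. The reason for working on path space rather than running a pseudo-gradient flow of $I$ directly is conclusion (2): under the mere Cerami hypothesis $\|I'(u)\|(1+\|u\|)\ge 8\epsilon/\delta$, a gradient flow may have to travel a distance comparable to $(1+\|u\|)$ before lowering $I$ by $2\epsilon$, so an \emph{ordinary}-distance bound of $2\delta$ cannot be read off from a flow when $\|u\|$ is large; Ekeland's principle, by contrast, moves the whole path by at most $\delta$ in the sup-metric, so every point of it moves at most $\delta$ in $X$ irrespective of its norm. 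Concretely, I equip $\Gamma$ with $\rho(\sigma,\tau)=\sup_{t\in M}\|\sigma(t)-\tau(t)\|_X$, which is complete because $X$ is Banach, $\Gamma$ is closed in $C(M;X)$, and $M$ is compact in the applications at hand, and I set $\Phi(\sigma)=\sup_{t\in M} I(\sigma(t))$, which is lower semicontinuous, bounded below and satisfies $\inf_\Gamma\Phi=c$. The given $\gamma$ is an $\epsilon$-minimizer, so Ekeland's principle at scale $\delta$ yields $\tilde\gamma\in\Gamma$ with $\Phi(\tilde\gamma)\le\Phi(\gamma)\le c+\epsilon$, $\rho(\tilde\gamma,\gamma)\le\delta$, and the stability estimate $\Phi(\sigma)\ge\Phi(\tilde\gamma)-\tfrac{\epsilon}{\delta}\rho(\sigma,\tilde\gamma)$ for every $\sigma\in\Gamma$. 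Since $\Phi(\tilde\gamma)\ge c$ and $\rho(\tilde\gamma,\gamma)\le\delta$, any peak point already delivers (1) and (2); the content is to turn the stability estimate into the Cerami bound (3).

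For the extraction step, let $m=\Phi(\tilde\gamma)\in[c,c+\epsilon]$ and consider the peak set $K=\{t: I(\tilde\gamma(t))=m\}$. On the non-critical set I take a locally Lipschitz pseudo-gradient $V$ with $\|V\|\le 2\|I'\|$ and $\langle I'(u),V(u)\rangle\ge\|I'\|^2$, and I test the stability estimate against the admissible perturbation $\sigma_s(t)=\tilde\gamma(t)-s\,\psi(t)\,(1+\|\tilde\gamma(t)\|)\,V(\tilde\gamma(t))/\|V(\tilde\gamma(t))\|$, where $\psi$ is a continuous cutoff equal to $1$ near $K$ and vanishing where $I\circ\tilde\gamma$ is not close to $m$. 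The weight $(1+\|\tilde\gamma(t)\|)$ is inserted precisely so that the two sides of Ekeland's inequality scale the same way: differentiating the max-function at $s=0^+$ (an envelope/Danskin computation over the peak set) gives, for a maximizing $t_0$ with $u=\tilde\gamma(t_0)$, a decrease rate $\ge \tfrac12(1+\|u\|)\|I'(u)\|$, while $\rho(\sigma_s,\tilde\gamma)\le s\,(1+\|u\|)$ once $\psi$ is localized tightly enough that $(1+\|\tilde\gamma(t)\|)$ is essentially constant on its support. Feeding these into $\Phi(\sigma_s)\ge\Phi(\tilde\gamma)-\tfrac{\epsilon}{\delta}\rho(\sigma_s,\tilde\gamma)$ and letting $s\to0^+$ cancels the common factor and yields $\|I'(u)\|(1+\|u\|)\le 8\epsilon/\delta$, which is (3); and $I(u)=m\in[c,c+\epsilon]\subset[c-2\epsilon,c+2\epsilon]$ together with $\mathrm{dist}(u,\gamma(M))\le\rho(\tilde\gamma,\gamma)\le\delta\le2\delta$ give (1) and (2).

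I expect the main obstacle to be this extraction step, on two fronts. First, admissibility: the perturbation must satisfy $\sigma_s|_{M_0}\in\Gamma_0$, so $\psi$ has to vanish on $M_0$; this is legitimate exactly because $a<c$ forces the relevant peak region $\{I\circ\tilde\gamma\ge m-\eta\}$, whose values are $\ge c-\eta$, to be disjoint from $M_0$, since on $M_0$ the admissible class keeps $I\circ\tilde\gamma$ below $c$ (in the mountain-pass application $\Gamma_0$ pins the endpoints, so $\sup_{M_0}I\circ\tilde\gamma=a<c-2\epsilon$ by the choice $\epsilon<(c-a)/2$). This is the one place where the hypothesis $c>a$ is genuinely used, and it is what makes the localized perturbation remain in $\Gamma$. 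Second, bookkeeping of constants: because $\rho$ is a supremum, an extraneous factor $\sup_{\mathrm{supp}\,\psi}(1+\|\tilde\gamma(\cdot)\|)$ threatens to appear, and controlling it requires shrinking the support of $\psi$ around $K$ using the continuity of $t\mapsto\|\tilde\gamma(t)\|$, together with the non-smooth differentiation of the supremum $\Phi(\sigma_s)$ in $s$ (Danskin's theorem for the upper Dini derivative over the compact peak set). Once these are handled, the numerology $8\epsilon/\delta$ follows from the pseudo-gradient inequality $\|V\|\le2\|I'\|$ and the Ekeland scale $\delta$.
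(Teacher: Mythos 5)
The paper itself gives no proof of Lemma \ref{lem2.5}; it is quoted verbatim from the reference [LW], so your attempt can only be compared with the standard arguments behind such statements (Willem's general minimax principle and its Cerami-type variants). Your overall plan --- Ekeland's principle for $\Phi(\sigma)=\sup_M I\circ\sigma$ on path space, followed by a Danskin-type extraction at the peak --- is indeed the standard route to the \emph{Palais--Smale} version of this lemma. But the step where you claim to obtain the Cerami bound (3) fails, and it fails for a structural reason, not a bookkeeping one. With the sup-norm metric $\rho$, your perturbation $\sigma_s(t)=\tilde\gamma(t)-s\,\psi(t)\,(1+\|\tilde\gamma(t)\|)\,V(\tilde\gamma(t))/\|V(\tilde\gamma(t))\|$ has penalty $\rho(\sigma_s,\tilde\gamma)=s\,\sup_{\mathrm{supp}\,\psi}\,(1+\|\tilde\gamma\|)$, while Danskin's theorem gives $\frac{d}{ds}\big|_{s=0^+}\Phi(\sigma_s)\le -\tfrac12\min_{t\in K}(1+\|\tilde\gamma(t)\|)\,\|I'(\tilde\gamma(t))\|$. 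Feeding both into the Ekeland stability inequality yields
$$ \min_{t\in K}\,(1+\|\tilde\gamma(t)\|)\,\|I'(\tilde\gamma(t))\| \;\le\; \frac{2\epsilon}{\delta}\,\sup_{\mathrm{supp}\,\psi}\,(1+\|\tilde\gamma\|). $$
Even in the most favorable case ($K=\{t_0\}$ a single point, $\psi$ localized so tightly that the supremum on the right is essentially $1+\|u\|$ with $u=\tilde\gamma(t_0)$), the factor $1+\|u\|$ now appears on \emph{both} sides and cancels, leaving only $\|I'(u)\|\le 2\epsilon/\delta$ --- the Palais--Smale conclusion of Willem's Theorem 2.8, not $(1+\|u\|)\|I'(u)\|\le 8\epsilon/\delta$. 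Your phrase ``cancels the common factor and yields $\|I'(u)\|(1+\|u\|)\le 8\epsilon/\delta$'' is precisely the gap: after the cancellation the weight is gone, and since $\|u\|$ is in no way controlled, the Cerami bound does not follow. More generally, multiplying the perturbation speed by \emph{any} factor that is essentially constant on $\mathrm{supp}\,\psi$ rescales gain and penalty identically, so no choice of weight can create the factor $(1+\|u\|)$ as long as distances in path space are measured by the norm.

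The known way to repair this is to change the metric on $X$ itself rather than the speed of the perturbation: use the weighted geodesic metric $d_*(u,v)=\inf_\sigma \int_0^1 \|\sigma'(t)\|/(1+\|\sigma(t)\|)\,dt$, which is complete, topologically equivalent to the norm, and with respect to which the metric slope of $I$ at $u$ is exactly $(1+\|u\|)\,\|I'(u)\|$. Applying Ekeland's principle on path space with the metric $\sup_t d_*(\cdot,\cdot)$, your weighted perturbation now has penalty $\approx s$ (the weight cancels against the metric, not against the gain), while the Danskin gain is still $\tfrac{s}{2}(1+\|u\|)\|I'(u)\|$; this is the actual mechanism behind Cerami-type deformation and minimax lemmas. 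Note that conclusion (2) then comes out naturally in the $d_*$-distance rather than the norm distance --- a discrepancy worth being aware of, though harmless for this paper, since the proof of Lemma \ref{lem3.3} uses only conclusions (1) and (3). Beyond this central issue there are two secondary gaps you should not paper over: completeness of $(\Gamma,\rho)$ is unjustified, because $\Gamma_0$ is an arbitrary (not necessarily closed) subset of $C(M_0;X)$ --- the fix is to apply Ekeland on the closed subclass of paths with $\sigma|_{M_0}=\gamma|_{M_0}$; and $M$ is not assumed compact in the statement, so the peak set $K$ may be empty and Danskin's theorem unavailable --- your argument, as you concede, proves the lemma only for compact $M$ (enough for the application $M=[0,1]$, but not for the lemma as stated). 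Finally, your admissibility argument (``the peak region avoids $M_0$'') uses that $I\circ\tilde\gamma$ stays below $c-2\epsilon$ on $M_0$; with $a$ defined via an infimum over $\Gamma_0$, as in the statement here, this is false for a general member of $\Gamma_0$, and it is only valid under the supremum-over-$\Gamma_0$ formulation used in the sources.
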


\section{proof of theorem $1.1$ }
We use the idea of \cite{CW,DZ}. First we verify the Cerami compactness property of the action functional on the closed subspace $X_{odd}.$
Then using the Mountain Pass Lemma \ref{lem2.5} , we can creat the Cerami sequence. Hence by the compactness, we get a  critical point in $X_{odd}.$ Accordding to the Palais' principle of symmetric criticality, see \cite{P}, it's a critical point in $X.$
\begin{prop}\label{prop3.1}
Let $(u_n)$ be a sequence in $L^p(\mathbb{R}^2)$, s.t. $ u_n \stackrel{a.e.}{\longrightarrow} u \in L^p(\mathbb{R}^2)\setminus \{0 \} . $
 $(v_n)$ be a bounded sequence in $ L^p $ s.t. $ \sup\limits_n B_1(|u_n|^p,|v_n|^p) < \infty .$
Then, there exists $n_0\in \mathbb{Z} $ and $C>0$ s.t. $\|v_n\|_* \leq C $ for $ n\geq n_0 $.
Furthermore, if $B_1(|u_n|^p,|v_n|^p) \to 0  $ and $ \|v_n\|_{L^p} \to 0,  $
then $ \| v_n \|_* \to 0. $	
\end{prop}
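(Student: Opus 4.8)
The plan is to establish a single coercivity-type inequality showing that, up to a controllable $L^p$-error, the quantity $B_1(|u_n|^p,|v_n|^p)$ dominates $\|v_n\|_*^p=\int_{\mathbb{R}^2}\log(1+|y|)|v_n|^p(y)\,dy$; both assertions then drop out by rearranging it. The engine is the elementary subadditivity of the weight. Since $1+|y|\le(1+|x-y|)(1+|x|)$ by the triangle inequality, taking logarithms gives $\log(1+|y|)\le\log(1+|x-y|)+\log(1+|x|)$, that is
$$\log(1+|x-y|)\ge\log(1+|y|)-\log(1+|x|).$$
The purpose of this inequality is to transfer the weight $\log(1+|y|)$ hidden in $\|v_n\|_*$ onto the kernel $\log(1+|x-y|)$ of $B_1$, at the cost of a term $\log(1+|x|)$ that becomes harmless once the variable $x$ is confined to a bounded set.

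The role of the hypothesis $u\not\equiv0$ is to produce a fixed ball carrying a uniform share of the mass of $u_n$. First I would fix $R>0$ large enough that $\int_{B_R}|u|^p\,dx>0$, which is possible because $u\in L^p\setminus\{0\}$. Since $u_n\to u$ almost everywhere, Fatou's lemma applied to $|u_n|^p\mathbf{1}_{B_R}$ yields $\liminf_n\int_{B_R}|u_n|^p\,dx\ge\int_{B_R}|u|^p\,dx>0$, so there are $n_0\in\mathbb{Z}$ and $c_0>0$ with $\int_{B_R}|u_n|^p\,dx\ge c_0$ for all $n\ge n_0$. Relying on a.e.\ convergence rather than $L^p$ convergence is exactly why Fatou, and not a direct passage to the limit, is the right tool; this extraction of a uniform positive mass from merely almost-everywhere convergence is the main technical point of the argument.

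Next I would discard everything in the (nonnegative) integrand of $B_1$ except the part with $x\in B_R$, and on that region bound $\log(1+|x|)\le\log(1+R)$. Combining this with the subadditivity inequality and Fubini gives, for $n\ge n_0$,
\begin{align*}
B_1(|u_n|^p,|v_n|^p) &\ge \int_{B_R}\!\!\int_{\mathbb{R}^2}\big[\log(1+|y|)-\log(1+R)\big]|u_n|^p(x)|v_n|^p(y)\,dy\,dx\\
&= \Big(\int_{B_R}|u_n|^p\Big)\Big(\|v_n\|_*^p-\log(1+R)\|v_n\|_{L^p}^p\Big).
\end{align*}
If the second factor is nonpositive, then already $\|v_n\|_*^p\le\log(1+R)\|v_n\|_{L^p}^p$; otherwise I replace $\int_{B_R}|u_n|^p$ by its lower bound $c_0$. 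In either case one obtains the uniform estimate
$$\|v_n\|_*^p\le\frac{1}{c_0}\,B_1(|u_n|^p,|v_n|^p)+\log(1+R)\,\|v_n\|_{L^p}^p\qquad(n\ge n_0).$$
Assertion (1) is then immediate, since the right-hand side is bounded under $\sup_n B_1<\infty$ together with the $L^p$-boundedness of $(v_n)$. For the second assertion, $B_1(|u_n|^p,|v_n|^p)\to0$ and $\|v_n\|_{L^p}\to0$ force the right-hand side to $0$, whence $\|v_n\|_*\to0$. I expect the only genuinely delicate points to be the Fatou-based mass extraction and the sign bookkeeping in the factored estimate; everything else is a direct rearrangement.
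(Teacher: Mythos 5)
Your proof is correct. One point of context: the paper itself does not prove this proposition at all --- it defers to the cited works of Cingolani--Weth (for $p=2$) and Cao--Dai--Zhang (for $p\ge 2$) --- so the fair comparison is with the proof given there. The skeleton of your argument is the same as theirs: restrict the $x$-variable of $B_1$ to a fixed bounded region, use the subadditivity estimate $\log(1+|x-y|)\ge \log(1+|y|)-\log(1+|x|)$ to transfer the weight $\log(1+|y|)$ onto the kernel, and factor via Fubini--Tonelli to reach the coercivity inequality $\|v_n\|_*^p\le \tfrac{1}{c_0}B_1(|u_n|^p,|v_n|^p)+\log(1+R)\|v_n\|_{L^p}^p$ for $n\ge n_0$, from which both conclusions drop out. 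The genuine difference is the mass-extraction step: the cited proof uses Egorov's theorem to produce a bounded set $A$ of positive measure on which $|u_n|\ge\delta$ pointwise for all large $n$, and then bounds $\int_A |u_n|^p\ge \delta^p|A|$; you instead apply Fatou's lemma to $|u_n|^p\mathbf{1}_{B_R}$ and get the integral lower bound $\int_{B_R}|u_n|^p\ge c_0$ directly. Your route is more elementary (Fatou in place of Egorov) and is exactly adapted to what the factorization needs, namely an integral rather than pointwise lower bound; the Egorov route yields slightly more information (a set where $|u_n|$ is uniformly large) that is simply not needed here. Two details worth making explicit if you write this up: (i) a priori $\|v_n\|_*$ could be infinite, so the factored inequality should be read in the extended reals (its negative part is finite because $(v_n)$ is bounded in $L^p$), and the finiteness of $B_1(|u_n|^p,|v_n|^p)$ is what then forces $\|v_n\|_*<\infty$ for $n\ge n_0$; (ii) your ``in either case'' step silently uses $B_1(|u_n|^p,|v_n|^p)\ge 0$, which holds because the kernel $\log(1+|x-y|)$ is nonnegative --- this is worth one line.
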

\begin{prop}
	Let $ (\widetilde{u_n})$ be a bounded sequence in $X $ such that $ \widetilde {u_n} \rightharpoonup u  $ weakly and a.e. in $X$.
	Then up to a subsequence $ B_1(|\widetilde {u_n}|^p, |\widetilde {u_n}|^{p-2} u (\widetilde {u_n} -u)  )  \longrightarrow 0 .$
\end{prop}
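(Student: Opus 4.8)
The plan is to exploit the elementary splitting of the logarithmic kernel together with the compact embedding of $X$ into $L^s(\mathbb{R}^2)$, $s\in[p,\infty)$, thereby reducing the double integral to two weighted $L^1$ estimates on the single function $g_n:=|\widetilde{u}_n|^{p-2}\,u\,(\widetilde{u}_n-u)$. First, since $(\widetilde{u}_n)$ is bounded in $X$ and $\widetilde{u}_n\rightharpoonup u$, the compactness property gives $\widetilde{u}_n\to u$ strongly in every such $L^s$ (and a.e., as assumed). Next, from $1+|x-y|\le(1+|x|)(1+|y|)$ I obtain the pointwise bound $\log(1+|x-y|)\le\log(1+|x|)+\log(1+|y|)$, whence
\begin{equation*}
\bigl|B_1(|\widetilde{u}_n|^p,g_n)\bigr|\le \|\widetilde{u}_n\|_*^{\,p}\,\|g_n\|_{L^1}+\|\widetilde{u}_n\|_{L^p}^{\,p}\int_{\mathbb{R}^2}\log(1+|y|)\,|g_n|\,dy .
\end{equation*}
Because $\|\widetilde{u}_n\|_*$ and $\|\widetilde{u}_n\|_{L^p}$ stay bounded, it suffices to show that both $\|g_n\|_{L^1}\to0$ and $\int\log(1+|y|)\,|g_n|\,dy\to0$.

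The unweighted factor is routine: by the generalized H\"older inequality with exponents $\tfrac{p}{p-2},p,p$ (or simply $2,2$ when $p=2$),
\begin{equation*}
\|g_n\|_{L^1}\le \|\widetilde{u}_n\|_{L^p}^{\,p-2}\,\|u\|_{L^p}\,\|\widetilde{u}_n-u\|_{L^p}\longrightarrow0,
\end{equation*}
using the strong $L^p$ convergence. The weighted factor is the crux, and this is where I expect the main obstacle. Indeed, applying H\"older directly in the measure $d\mu$ only yields the bound $\|\widetilde{u}_n\|_*^{\,p-2}\|u\|_*\,\|\widetilde{u}_n-u\|_*$, which is merely \emph{bounded}: we have no strong convergence in $L^p(d\mu)$, so this quantity need not vanish. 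The weight $\log(1+|y|)$ is unbounded, and the failure of compactness in the weighted space must be circumvented.

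To overcome this I would split $\mathbb{R}^2$ into $\{|y|\le R\}$ and $\{|y|>R\}$ and run a tightness argument driven by the membership $u\in L^p(d\mu)$. On the tail, distributing the weight as $\log(1+|y|)^{(p-2)/p}\cdot\log(1+|y|)^{1/p}\cdot\log(1+|y|)^{1/p}$ across the three factors of $g_n$ and applying H\"older with exponents $\tfrac{p}{p-2},p,p$ gives
\begin{equation*}
\int_{|y|>R}\log(1+|y|)\,|g_n|\,dy\le \|\widetilde{u}_n\|_*^{\,p-2}\,\|\widetilde{u}_n-u\|_*\Bigl(\int_{|y|>R}\log(1+|y|)\,|u|^p\,dy\Bigr)^{1/p}\le C\,\eta(R),
\end{equation*}
where $\eta(R)\to0$ as $R\to\infty$ \emph{uniformly in} $n$, since $u\in L^p(d\mu)$ while $\|\widetilde{u}_n\|_*$ and $\|\widetilde{u}_n-u\|_*$ remain bounded. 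On the bounded region the weight is controlled by $\log(1+R)$, so $\int_{|y|\le R}\log(1+|y|)\,|g_n|\,dy\le\log(1+R)\,\|g_n\|_{L^1}\to0$ for each fixed $R$ by the previous step. A standard two-$\varepsilon$ argument (first fix $R$ to make the tail smaller than $\varepsilon/2$, then send $n\to\infty$ to absorb the bounded part) then yields $\int\log(1+|y|)\,|g_n|\,dy\to0$, which combined with the first estimate completes the proof. Passage to a subsequence is needed only to secure the a.e. convergence already built into the hypothesis.
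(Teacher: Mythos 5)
Your proof is correct. Note first that for this proposition the paper itself supplies no argument: it is stated and then deferred to \cite{CW} (for $p=2$) and \cite{DZ} (for $p\geq 2$), so your write-up is a self-contained replacement rather than a variant of something in the text. The mechanism you use --- the kernel bound $\log(1+|x-y|)\le\log(1+|x|)+\log(1+|y|)$, the compact embedding $X\hookrightarrow\hookrightarrow L^s$, $s\in[p,\infty)$, to kill the unweighted factor, and a ball/tail splitting in which H\"older is arranged so that the entire tail contribution is controlled by $\bigl(\int_{|y|>R}\log(1+|y|)\,|u|^p\,dy\bigr)^{1/p}\to 0$ --- is precisely the tightness mechanism that drives the proofs in the cited references: the fixed factor $u$ in the second argument of $B_1$ is what rescues compactness, since, as you correctly point out, strong convergence $\widetilde u_n\to u$ in $L^p(d\mu)$ is \emph{not} available and a naive weighted H\"older bound only gives boundedness. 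Your exponent bookkeeping is right ($\tfrac{p-2}{p}+\tfrac{1}{p}+\tfrac{1}{p}=1$, and $\|\widetilde u_n\|_*^{p-2}$, $\|\widetilde u_n-u\|_*$ remain bounded uniformly in $n$ and $R$), the degenerate case $p=2$ is handled, and your closing observation is also accurate: weak convergence in $X$ composed with a compact embedding already forces strong $L^p$ convergence of the whole sequence, so the subsequence in the statement is needed only for the a.e. convergence built into the hypothesis.
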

The two propositions can be seen in \cite{CW} for $p=2$ and in \cite{DZ} for $p \geq 2$. 
Based on this two propositions, we can verify the Cerami compactness property for the action functional.
\begin{lem} \label{lem3.3}
Let $(u_n) \subset X_{odd} $ satisfied
\begin{eqnarray} \label{eq3.1}
I(u_n) \longrightarrow d>0, \quad \|I^{\prime}(u_n)\|_{X^{\prime}_{odd}} \big( 1+\|u_n\|_X   \big) \longrightarrow 0, \quad \text{as} \:n \to \infty  . 
\end{eqnarray}
Then up to a subsequence, there exist points $(x_n) \subset \mathbb{Z}^2$, such that
$$ u_n(\cdot - x_n) \longrightarrow u \quad  \text{strongly in} \quad X_{odd}, \quad \text{as} \quad  n\to \infty ,$$
for some nonzero critical point $ u \in X_{odd} $ of $I.$	
\end{lem}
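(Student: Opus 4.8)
The plan is to run the concentration--compactness scheme of \cite{CW,DZ} on the closed subspace $X_{odd}$, the one genuinely new point being that every translation must be chosen so as to remain inside the odd class. \emph{Step 1 (boundedness in $H^1$).} From $\langle I'(u_n),u_n\rangle=\|u_n\|_{H^1}^2+V_0(u_n)$ and $2pI(u_n)=p\|u_n\|_{H^1}^2+V_0(u_n)$ I obtain the identity $(p-1)\|u_n\|_{H^1}^2=2pI(u_n)-\langle I'(u_n),u_n\rangle$. By \eqref{eq3.1} the right-hand side tends to $2pd$ (note $|\langle I'(u_n),u_n\rangle|\le \|I'(u_n)\|_{X'_{odd}}\|u_n\|_X\to0$), so for $p\ge2$ the sequence is bounded in $H^1$ with $\|u_n\|_{H^1}^2\to \tfrac{2pd}{p-1}$. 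Consequently $V_0(u_n)\to-\tfrac{2pd}{p-1}<0$; since $|V_2(u_n)|\le C\|u_n\|_{L^{4p/3}}^{2p}$ is bounded by the HLS estimate and Lemma 2.2, also $V_1(u_n)=V_0(u_n)+V_2(u_n)$ stays bounded.

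\emph{Step 2 (non-vanishing and vertical confinement).} If vanishing occurred, i.e.\ $\sup_{y}\int_{B(y,1)}|u_n|^2\,dx\to0$, then Lions' lemma would give $u_n\to0$ in $L^{4p/3}$, hence $V_2(u_n)\to0$ and $V_1(u_n)\to-\tfrac{2pd}{p-1}<0$, contradicting $V_1\ge0$ (the kernel $\log(1+|x-y|)$ is nonnegative). So there are $\delta_0>0$ and centres $y_n=(a_n,b_n)$ with $\int_{B(y_n,1)}|u_n|^2\ge\delta_0$; a Gagliardo--Nirenberg interpolation together with the $H^1$-bound upgrades this to a uniform lower bound on the local $L^p$-mass. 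By oddness a mirror bump of equal $L^p$-mass sits at $(a_n,-b_n)$, and its mutual $V_1$-interaction with the first is at least $c\log(1+2|b_n|)$; boundedness of $V_1(u_n)$ then forces $|b_n|$ to remain bounded. I may therefore translate purely horizontally, choosing $x_n\in\mathbb{Z}^2$ with vanishing second coordinate and $|a_n-(x_n)_1|\le1$, and set $\widetilde u_n:=u_n(\cdot-x_n)\in X_{odd}$. Up to a subsequence $\widetilde u_n\rightharpoonup u$ weakly in $H^1$ and a.e.; the mass retained in a fixed compact set gives $u\ne0$, and oddness passes to the limit, so $u\in X_{odd}\setminus\{0\}$.

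\emph{Steps 3--4 (boundedness in $X$, the limit equation, strong convergence).} Since $V_0,V_1,V_2$ are translation invariant, $V_1(\widetilde u_n)=V_1(u_n)$ is bounded, so Proposition \ref{prop3.1} (applied with $u_n=v_n=\widetilde u_n$ and limit $u\ne0$) yields $\|\widetilde u_n\|_*\le C$; thus $(\widetilde u_n)$ is bounded in $X$ and the compact embedding of Lemma 2.2 gives $\widetilde u_n\to u$ in $L^s$ for every $s\in[p,\infty)$. Passing to the limit in $\langle I'(\widetilde u_n),\varphi\rangle=0$ for odd $\varphi$ --- using Proposition 3.2 for the $B_1$-part and HLS with the strong $L^{4p/3}$-convergence for the $B_2$-part --- shows $I'(u)=0$, so $u$ is a nonzero critical point. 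For strong convergence set $w_n:=\widetilde u_n-u$ and expand
$$\langle I'(\widetilde u_n)-I'(u),\,w_n\rangle=\|w_n\|_{H^1}^2+B_0\big(|\widetilde u_n|^p,|\widetilde u_n|^{p-2}\widetilde u_n\,w_n\big)-B_0\big(|u|^p,|u|^{p-2}u\,w_n\big).$$
The left side tends to $0$; the $B_2$-contributions vanish by HLS and the strong $L^{4p/3}$-convergence, while the $B_1$-contributions vanish by Proposition 3.2 together with the Brezis--Lieb splitting furnished by the $\epsilon$-inequality (Lemma 2.1). Hence $\|w_n\|_{H^1}\to0$. This gives $V_0(\widetilde u_n)\to V_0(u)$ and, $V_2$ being continuous, $V_1(\widetilde u_n)\to V_1(u)$; the $\epsilon$-inequality then yields $B_1(|\widetilde u_n|^p,|w_n|^p)\to0$, and since also $\|w_n\|_{L^p}\to0$, the second assertion of Proposition \ref{prop3.1} gives $\|w_n\|_*\to0$. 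Combining, $\widetilde u_n\to u$ in $X_{odd}=H^1_{odd}\cap L^p(d\mu)$.

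\emph{Main obstacle.} The crux throughout is the nonlocal term with the positive, unbounded kernel $\log(1+|x-y|)$: it is neither weakly continuous nor dominated by the Sobolev compactness, which is precisely why Propositions \ref{prop3.1}--3.2 and the Brezis--Lieb $\epsilon$-inequality are indispensable. A second, purely two-dimensional difficulty is that $\|\cdot\|_*$ is not translation invariant; this dictates both the vertical-confinement argument of Step 2 and the re-proof of the $X$-bound in Step 3. I expect the subtlest point to be that the horizontally translated sequence stays ``almost critical'': since $|x_n|$ may diverge, $\|\varphi(\cdot+x_n)\|_*$ can grow like $(\log|x_n|)^{1/p}$, and only the factor $(1+\|u_n\|_X)$ in the Cerami condition \eqref{eq3.1} --- forcing $\|I'(u_n)\|_{X'_{odd}}$ to decay correspondingly faster --- keeps such products under control. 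This is exactly where the Cerami property, rather than the weaker Palais--Smale condition, is essential.
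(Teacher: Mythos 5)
Your overall scheme is the paper's: the Cerami identity gives the $H^1$-bound, Lions' lemma gives non-vanishing, Proposition \ref{prop3.1} gives the $\|\cdot\|_*$-bound after translation, and strong convergence is assembled from Propositions 3.1--3.2, Lemma 2.1 and HLS. Your Step 2 ``vertical confinement'' argument (the mirror bump at $(a_n,-b_n)$ forces $V_1(u_n)\geq c\log(1+2|b_n|-2)$, so boundedness of $V_1(u_n)$ pins $|b_n|$ down and the translations $x_n$ can be taken of the form $(k_n,0)$) is a genuine and necessary addition which the paper glosses over: the paper takes $x_n\in\mathbb{Z}^2$ directly from Lions' lemma, yet oddness of $u_n(\cdot-x_n)$, membership of the limit in $X_{odd}$, and the pairing of $I^{\prime}(u_n)\in X^{\prime}_{odd}$ with translated test functions $v(\cdot+x_n)$ all require horizontal translations. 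Your closing remark on the growth of $\|\varphi(\cdot+x_n)\|_*$ versus the Cerami factor is exactly the estimate the paper carries out at length in its Step 4.

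There is, however, a genuine gap in your ordering: you assert $I^{\prime}(u)=0$ while only weak convergence is available, citing Proposition 3.2 for the $B_1$-part. Proposition 3.2 concerns $B_1\big(|\widetilde u_n|^p,|\widetilde u_n|^{p-2}u(\widetilde u_n-u)\big)$ and says nothing about $B_1\big(|\widetilde u_n|^p,|\widetilde u_n|^{p-2}\widetilde u_n\varphi\big)$ for a fixed odd $\varphi$; worse, this limit passage cannot hold at that stage. Indeed, for $\varphi$ supported in $B_K$ write $\log(1+|x-y|)=\log(1+|x|)+R(x,y)$ with $|R|\leq\log(1+K)$; then $B_1\big(|\widetilde u_n|^p,|\widetilde u_n|^{p-2}\widetilde u_n\varphi\big)=\|\widetilde u_n\|_*^p\int|\widetilde u_n|^{p-2}\widetilde u_n\varphi+o(1)$-correctable terms, so (choosing $\varphi$ with $\int|u|^{p-2}u\varphi\neq0$) convergence to the right limit is equivalent to $\|\widetilde u_n\|_*^p\to\|u\|_*^p$, i.e.\ to precisely the strong convergence you have not yet proved. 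This matters because criticality is then used twice downstream: to kill $\langle I^{\prime}(u),w_n\rangle$ and to identify $V_0(u)=-\|u\|_{H^1}^2$ behind ``$V_0(\widetilde u_n)\to V_0(u)$'' (note $V_1$ is only weakly lower semicontinuous on $H^1$, so this identification really needs $\langle I^{\prime}(u),u\rangle=0$). As written the argument is circular. The repair is the paper's ordering: prove $\langle I^{\prime}(u),w_n\rangle\to0$ directly, without criticality, using that $u$ is a \emph{fixed} element of $X$ (split the $d\mu$-integrals over $B_R$ and $B_R^c$ and use smallness of $\int_{B_R^c}|u|^p\,d\mu$); your expansion then yields $\|w_n\|_{H^1}\to0$ together with $B_1\big(|\widetilde u_n|^p,|\widetilde u_n|^{p-2}w_n^2\big)\to0$, after which Proposition \ref{prop3.1} and the pointwise $\epsilon$-inequality give $\|w_n\|_*\to0$ (bypassing your $V_1(\widetilde u_n)\to V_1(u)$ step), and only then does $I^{\prime}(u)=0$ follow from the $C^1$-continuity of $I$ combined with the growth-control estimate.
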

Below we give a refined and rigorous proof for all $p \geq 2$ for this key lemma.
\begin{proof}
 For clarity, We divide the proof into several steps.\\
 \textbf{Step 1}: If $(u_n)$  satisfy condition \eqref{eq3.1}, then $(u_n)$ is bounded in $H^1.$	In fact, we have
$$ o(1)= \big\langle I^{\prime}(u_n),u_n \big \rangle =\|u_n\|_{H^1}^2 + V_0(u_n) , $$
$$ d \longleftarrow I(u_n)=\frac{1}{2}\|u\|_{H^1}^2 + \frac{1}{2p}V_0(u), $$
then $d-\frac{1}{2p}o(1) = \bigg( \frac{1}{2} -\frac{1}{2p} \bigg) \|u_n\|_{H^1}^2, $ so $(u_n)$ is bounded in $H^1.$	\\
\textbf{Step 2}: We claim $(u_n)$ is non-vanishing:
$$ \liminf\limits_{n \to \infty} \sup\limits_{x \in \mathbb{Z}^2} \int_{B_2(x)} u_n^2(y) dy >0   .$$
If not, by the Lion's vanishing lemma, see \cite{S,W}, for each $s>2$, we have $u_n \to 0 $ in $L^s.$ From
$$   o(1)= \big\langle I^{\prime}(u_n),u_n \big \rangle =\|u_n\|_{H^1}^2 + V_1(u_n) -V_2(u_n)   , $$
we get 
$$ \|u_n\|_{H^1}^2 + V_1(u_n) = o(1) + V_2(u_n) .$$
Substituted into $I(u_n)$, yield
$$ d \longleftarrow  I(u_n)=\frac{1}{2}\|u_n\|_{H^1}^2 + \frac{1}{2p}  \bigg( V_1(u_n)  - V_2(u_n)  \bigg)  \longrightarrow 0, $$
absurd. Theorefore there exist points $(x_n) \in \mathbb{Z}^2 $ such that $ \inf_n \int_{B_2(0)} u_n^2(x -x_n)dx >0 .$
Now we define the translation functions $  \widetilde {u_n}=u_n(\cdot - x_n) .$ Also $ \widetilde {u_n} \rightharpoonup u $ in $H^1$
 for some $u \in H^1.$ By the nonvanishing lemma and local compactness, $u \not \equiv 0.$  \\
\textbf{Step 3}: $(\widetilde{u_n})  $ is bounded in $L^p(d \mu).$ In fact, from
\begin{align*}
V_1(\widetilde {u_n}) &= \big\langle I^{\prime}(\widetilde {u_n}) ,\widetilde {u_n}\big \rangle - \|\widetilde {u_n}\|_{H^1}^2 +
V_2(\widetilde {u_n})  \\
&=\big\langle I^{\prime}(u_n) ,u_n \big \rangle - \|u_n\|_{H^1}^2 +
V_2(u_n)  \\
&=o(1) -  \|u_n\|_{H^1}^2 + V_2(u_n),
\end{align*}
we get $ V_1(\widetilde {u_n}) $ is bounded. By the Proposition \ref{prop3.1}, we get   $ \|\widetilde {u_n} \|_*^p $ is bounded.
Hence $(\widetilde{u_n})  $ is bounded in $X$, which is compactely embedding in $L^s$ for all $s \geq p.$\\
\textbf{Step 4}:   $ \widetilde {u_n} \longrightarrow u $ strongly in $X_{odd}$.
First, we claim that:
$$ \big\langle I^{\prime}(\widetilde {u_n}) ,\widetilde {u_n} - u  \big \rangle \longrightarrow 0 \; \text{as} \; n \to \infty. $$
In fact, by the $ \mathbb{Z}^2- $translation invariance, we have
$$ \bigg| \big\langle I^{\prime}(\widetilde {u_n}) ,\widetilde {u_n} - u  \big \rangle \bigg|
= \bigg| \big\langle I^{\prime}(u_n) ,u_n - u(\cdot + x_n)  \big \rangle \bigg|  \leq
\|I^{\prime}(u_n)\|_{X^{\prime}_{odd}} \bigg( \|u_n\|_X + \|u(\cdot+x_n)\| _X \bigg)  $$
Now, we estimate the last two terms in the following way:
\begin{align*}
\bigg| \|u_n\|_*^p - \log(1+|x_n|) \|\widetilde {u_n}\|_{L^p}^p  \bigg|  &=
\bigg| \int \log(1+|x-x_n|) |\widetilde {u_n}|^p dx-  \int \log(1+|x_n|) |\widetilde {u_n}|^p dx     \bigg| \\
& = \bigg|  \int_A \log \bigg( \frac{1+|x-x_n|}{1+|x_n|}\bigg) |\widetilde {u_n}|^p  
+\int_B \log \bigg( \frac{1+|x-x_n|}{1+|x_n|}\bigg) |\widetilde {u_n}|^p  \bigg|\\
&= \bigg| \int_A \log \bigg( \frac{1+|x-x_n|}{1+|x_n|}\bigg) |\widetilde {u_n}|^p
-  \int_B \log \bigg( \frac{1+|x|}{1+|x-x_n|}\bigg) |\widetilde {u_n}|^p  \bigg|, \\
\end{align*}
where $ A=\{ |x-x_n| \geq |x_n| \}, B=\{ |x-x_n| \leq |x_n| \}.  $  Then we choose $ \delta \in (0,1) $ fixed, 
set $ D_1=\{ |x-x_n| \leq \delta \} \cap B, D_2 = \{ \delta  \leq |x-x_n| \leq |x_n|  \}  ,$ and each term is 
bounded by a constant independed of $n$:
\begin{align*}
 \int\limits_{|x-x_n| \geq |x_n| } \log \bigg( \frac{1+|x-x_n|}{1+|x_n|}\bigg) |\widetilde {u_n}|^p  & \leq
  \int\limits_{A } \log \bigg( \frac{(1+|x|)(1+|x_n|)}{1+|x_n|}\bigg) |\widetilde {u_n}|^p \\
  & \leq \int\limits_{A } \log \big( 1+|x| \big) |\widetilde {u_n}|^p \\
  & \leq C ,
\end{align*}
\begin{align*}
\int\limits_{\delta  \leq |x-x_n| \leq |x_n|} \log \bigg( \frac{1+|x_n|}{1+|x-x_n|}\bigg) |\widetilde {u_n}|^p
&\leq  \int\limits_{D_2} \log \bigg( 1+ \frac{|x|}{1+|x-x_n|}\bigg) |\widetilde {u_n}|^p \\
& \leq  \int\limits_{D_2} \log \bigg( 1+ \frac{|x|}{1+\delta}\bigg) |\widetilde {u_n}|^p\\
& \leq \int\limits_{D_2} \log \big( 1+ |x| \big) |\widetilde {u_n}|^p \\
& \leq C ,
\end{align*}
\begin{align*}
\int\limits_{\{ |x-x_n| \leq \delta \} \cap B } \log(1+|x_n|) |\widetilde {u_n}|^p
& \leq  \int\limits_{D_1}  \log(1+|x|+ \delta ) |\widetilde {u_n}|^p \\
&\leq   \int\limits_{D_1}  \log \big( 2 (1+|x|)^2 \big) |\widetilde {u_n}|^p \\
&\leq 2 \int\limits_{D_1} |\widetilde {u_n}|^p + 2  \int\limits_{D_1} \log(1+|x|) |\widetilde {u_n}|^p \\
& \leq C ,
\end{align*}
\begin{align*}
\int\limits_{\{ |x-x_n| \leq \delta \} \cap B } \log(1+|x-x_n|) |\widetilde {u_n}|^p
&\leq \bigg( \int\limits_{D_1} \big( \log(1+|x-x_n|) \big) ^2   \bigg)^{\frac{1}{2}}  \cdot
\bigg( \int\limits_{D_1}  |\widetilde {u_n}|^{2p} \bigg)^{\frac{1}{2}} \\
&=  \bigg( \int\limits_{|y| \leq \delta} \big( \log(1+|y|) \big) ^2   \bigg)^{\frac{1}{2}} \cdot
\bigg( \int\limits_{D_1}  |\widetilde {u_n}|^{2p} \bigg)^{\frac{1}{2}}  \\
&\leq C_{\delta} .
\end{align*}
Hence the above estimates yield
$$ \bigg| \|u_n\|_*^p - \log(1+|x_n|) \|\widetilde {u_n}\|_{L^p}^p  \bigg| \leq C, $$
 since $u \not\equiv 0$, we get 
$$  \bigg| \|u_n\|_*^p -  C_2 \log(1+|x_n|) \bigg|  \leq C_1  .  $$
Also for $ \| u(\cdot +x_n)\|_*^p $, we have
\begin{align*}
\|u(\cdot + x_n)\|_*^p &= \int \log(1+|x|) |u|^p(x+x_n)dx\\
&=\int \log(1+|x-x_n|) |u|^p \\
&\leq \int \log(1+|x|) |u|^p + \int \log(1+|x_n|) |u|^p\\
&\leq C_3 +C_4 \log(1+|x_n|)
\end{align*}
Combining the two estimates, we have
$$ \bigg( \|u_n\|_X + \|u(\cdot + x_n)\| _X \bigg) \leq C \bigg( 1+ \|u_n\|_X  \bigg) .$$
Then by the assumption (\ref{eq3.1}) in Lemma $3.3$, we have
$$ \big\langle I^{\prime}(\widetilde {u_n}) ,\widetilde {u_n} - u  \big \rangle \longrightarrow 0  $$
as claimed.
But on the other side, we get
\begin{align*}
o(1)&=\big\langle I^{\prime}(\widetilde {u_n}) ,\widetilde {u_n} - u  \big \rangle\\
&= \big\langle I^{\prime}(\widetilde {u_n}) ,\widetilde {u_n} \big \rangle -\big\langle I^{\prime}(\widetilde {u_n}) , u  \big \rangle\\
&= \|\widetilde {u_n}\|_{H^1}^2 -\|u\|_{H^1}^2 +o(1)+ \bigg\langle V_0^{\prime}(\widetilde {u_n}),\widetilde {u_n}-u \bigg\rangle.
\end{align*}
Estimating the each term yield
$$ \bigg\langle V_2^{\prime}(\widetilde {u_n}),\widetilde {u_n}-u \bigg\rangle \longrightarrow 0 $$
by the compact embedding of $ X \hookrightarrow  \hookrightarrow L^s $ and the Hardy-Littlewood-Sobolev Inequality.
Now we estimate the $V_{1}^{\prime}:$ 
\begin{align*}
\bigg\langle V_1^{\prime}(\widetilde {u_n}),\widetilde {u_n}-u \bigg\rangle
&= B_1 \bigg( |\widetilde {u_n}|^{p},|\widetilde {u_n}|^{p-2}  (\widetilde {u_n} (\widetilde {u_n}-u)     \bigg) \\
&=B_1 \bigg( |\widetilde {u_n}|^{p},|\widetilde {u_n}|^{p-2} \big(  (\widetilde {u_n}-u)^2 + u (\widetilde {u_n}-u) \big)    \bigg)\\
&= B_1 \bigg( |\widetilde {u_n}|^{p}, |\widetilde {u_n}|^{p-2} |\widetilde {u_n}-u|^2       \bigg)
+B_1 \bigg( |\widetilde {u_n}|^{p}, |\widetilde {u_n}|^{p-2}   u (\widetilde {u_n}-u)    \bigg).
\end{align*}
But $  B_1 \bigg( |\widetilde {u_n}|^{p}, |\widetilde {u_n}|^{p-2}   u (\widetilde {u_n}-u)     \bigg) \to 0 $ by the Proposition $ 3.2 $.
Let $ v_n^p = |\widetilde {u_n}|^{p-2} |\widetilde {u_n}-u|^2  $, then
$$ B_1 \bigg( |\widetilde {u_n}|^{p}, |\widetilde {u_n}|^{p-2} |\widetilde {u_n}-u|^2   \bigg)
= B_1 \bigg( |\widetilde {u_n}|^{p}, |v_n|^{p}       \bigg)  \geq 0 $$
and we get
\begin{align*}
o(1) &=\big\langle I^{\prime}(\widetilde {u_n}) ,\widetilde {u_n} - u  \big \rangle\\
&=o(1)+\|\widetilde {u_n}\|_{H^1}^2 -\|u\|_{H^1}^2  + B_1 \big( |\widetilde {u_n}|^{p}, |v_n|^{p} \big)\\
&\geq o(1)+\|\widetilde {u_n}\|_{H^1}^2 -\|u\|_{H^1}^2,
\end{align*}
which implies $ \|\widetilde {u_n}\|_{H^1}^2  \to \|u\|_{H^1}^2 $ and $  B_1 \big( |\widetilde {u_n}|^{p}, |v_n|^{p} \big) \to 0 .$
So we get the strong convergence in $ H^1:\; \widetilde {u_n} \to u  $. Again, by the compact embedding of $ X \hookrightarrow \hookrightarrow L^p $,
we get $ v_n^p = |\widetilde {u_n}|^{p-2} |\widetilde {u_n}-u|^2 \to 0 $ in $ L^1 $. By the Proposition $ 3.1 $,
we get $ \|v_n\|_*^p \to 0 $.\\
Now, applying the Lemma $ 2.1 $, we have
\begin{align*}
o(1) &=\int \log(1+|x|) |\widetilde {u_n}|^{p-2} |\widetilde {u_n}-u|^2 \\
&=\int \bigg(  |\widetilde {u_n}-u|^{p-2}  + |\widetilde {u_n}|^{p-2} -  |\widetilde {u_n}-u|^{p-2}  \bigg)  |\widetilde {u_n}-u|^2 d\mu \\
& =\int  |\widetilde {u_n}-u|^{p} d \mu  +
\int  \bigg( |\widetilde {u_n}|^{p-2} -  |\widetilde {u_n}-u|^{p-2}  \bigg)  |\widetilde {u_n}-u|^2 d\mu \\
&\geq \int  |\widetilde {u_n}-u|^{p} d \mu - \epsilon \int  |\widetilde {u_n}|^{p-2}  |\widetilde {u_n}-u|^2 d\mu
- C_{\epsilon} \int |u|^{p-2} |\widetilde {u_n}-u|^2 d\mu\\
& \geq  \int  |\widetilde {u_n}-u|^{p} d \mu - M \epsilon - C_{\epsilon} o(1),
\end{align*}
So we get $ \| \widetilde {u_n}-u \|_* \to 0 $. Combining with the $ H^1 $ convergence, we get the
strong convergence in $ X: \; \| \widetilde {u_n}-u \|_X  \to 0 $.\\
 \textbf{Step 5}: We prove $ u $ is the critical point: $ I^{\prime}(u)=0 $. This is easily checked. \\
Let $v \in X_{odd} $, as we have already shown
$$ \|v(\cdot + x_n)\|_* \leq C(1+ \log(1+|x_n|)) \leq C(1+\|u_n\|_*)  . $$
By this and $ u \neq 0 $, we have $ \|v (\cdot + x_n)\|_X \leq C(1+\|u_n\|_X) $.
Notice that $ \big \langle I^{\prime}(u),v  \big \rangle  =\lim \limits_{n \to \infty}  \big\langle I^{\prime}(\widetilde {u_n}) , v  \big \rangle .$
But we also have
\begin{align*}
\bigg|  \big\langle I^{\prime}(\widetilde {u_n}) , v  \big \rangle    \bigg| &=
\big|  \langle I^{\prime}(u_n) ,  v(\cdot + x_n)    \rangle \big| \\
&\leq  \|I^{\prime}(u_n)\|_{X^{\prime}_{odd}}  \| v(\cdot + x_n) \|_X\\
& \leq   C \|I^{\prime}(u_n)\|_{X^{\prime}_{odd}}  \bigg( 1+\|u_n\|_X  \bigg) \longrightarrow 0
\end{align*}
by the assumption. Hence $ \big \langle I^{\prime}(u),v  \big \rangle =0 $. And we finish the proof.
\end{proof}
\begin{proof}[Proof of Theorem $1.1$] (1): $c_{mp,odd} \geq c_{mp} >0$ obviously.\\
	(2): First we use the Mountain Pass Lemma \ref{lem2.5} to construct the Cerami sequence $(u_n) \subset X_{odd}$, then applying the Cerami compactness property of Lemma \ref{lem3.3} for the action functional , we can extract a subsequence converges to a 
	nonzero critical point $u$, and $I(u) =c_{mp,odd}$. By the Palais' principle of symmetric criticality, 
	$u$ is a critical point in $X$, and satisfies the corresponding properties of Lemma $2.3$.\\
	(3): First we notice $ c_{g,odd}\geq  c_{odd} = c_{mm,odd} \geq c_{mp,odd} ,$ but also $ c_{g,odd} \leq c_{mp,odd} .$ 
	so $ c_{g,odd}= c_{odd}= c_{mm,odd}= c_{mp,odd} .$ The  equality $  c_{odd}= c_{mm,odd} $ is by the monotonicity of $I(tu)$ for $t$,
	 see \cite{zbMATH06524159,DZ}.\\
	(4):This is obvious, since all the ground state solutions have constant sign, see \cite{zbMATH06524159,DZ}, and it can not be zero on the $\{x=(x_1,x_2): x_2 =0\}. $
\end{proof}
\section{Proof of Theorem $1.2$}
In this section, we  prove all the minimal action odd solution are axially symmetric and $\frac{\partial u}{\partial x_1} <0$
along the ray starting from the axis in $x_1$ direction. To prove this, we first reformulate the minimal action odd 
solution problem into a ground state problem in the upper half plane for some similar but more complex equation. 
Then we will carefully apply the method of moving plane to this equation to derive the axial symmetry.
\begin{prop}
For every $v \in X_{odd}(\mathbb{R}^2),$ we have 
$$ I(v)= \widetilde{I}(v \big|_{\mathbb{R}^2_+} ) , $$
where the new action functional 
$ \widetilde{I}: \widetilde{X}:=H^1_0( \mathbb{R}^2_+) \cap L^p(d \mu)\big|_{ \mathbb{R}^2_+} \to \mathbb{R} $ 
 is defined on the upper halfplane by
 \begin{eqnarray*}
 \widetilde{I}(v)&= \int_{\mathbb{R}^2_+} (|\nabla v|^2 + |v|^2)dx + \frac{1}{2p} 
 \bigg(  2 \int_{\mathbb{R}^2_+} \int_{\mathbb{R}^2_+} \big( \log |x-y|\big) |v|^p (x)|v|^p(y)dxdy\\
 & +  \int_{\mathbb{R}^2_+} \int_{\mathbb{R}^2_+} \big[ \log \big((x_1 -y_1)^2 + (x_2+y_2)^2 \big) \big]  |v|^p (x)|v|^p(y)dxdy \bigg) .
 \end{eqnarray*}
In pariticular, if $u$ is the minimal action odd solution, then $u \in  \widetilde{\mathcal{N}}$ with 
$$ \widetilde{\mathcal{N}}:= \bigg\{  w \in \widetilde{X} : w \not\equiv 0, \langle \widetilde{I}^{\prime}(w),w \rangle =0 \bigg\} ,$$
and we have
$$ \widetilde{I}(u)= \inf_{\widetilde{\mathcal{N}}} \widetilde{I}(w) . $$
\end{prop}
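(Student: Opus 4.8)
The plan is to establish the functional identity by a direct computation that exploits the parity of $|v|^p$ and of the gradient energy in the $x_2$-variable, and then to transfer the variational characterization from $X_{odd}$ to $\widetilde X$ through the odd-reflection isometry. First I would record the basic symmetry: if $v\in X_{odd}$, then $v(x_1,-x_2)=-v(x_1,x_2)$ forces $|\nabla v|^2$, $v^2$ and $|v|^p$ to be even in $x_2$. Writing $R(x_1,x_2)=(x_1,-x_2)$ for the reflection across $\partial\mathbb{R}^2_+$, evenness of the gradient energy gives $\tfrac12\int_{\mathbb{R}^2}(|\nabla v|^2+v^2)=\int_{\mathbb{R}^2_+}(|\nabla v|^2+v^2)$, which is exactly the local part of $\widetilde I$.

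Next I would treat the nonlocal term. The delicate point is that $\log|x-y|$ changes sign and is unbounded, so the splitting of the domain into quadrants cannot be done naively. I would first pass to the absolutely convergent decomposition $V_0=V_1-V_2$ from Section~2, where $B_1$ carries $\log(1+|x-y|)\ge 0$ and $B_2$ is controlled by the HLS bound $|V_2|\le C\|v\|_{L^{4p/3}}^{2p}$; on each of $B_1$ and $B_2$ Fubini applies, so I may legitimately split $\mathbb{R}^2\times\mathbb{R}^2$ into the four products of half-planes. Using $|v|^p\circ R=|v|^p$ and $|Rx-Ry|=|x-y|$, the $(\mathbb{R}^2_-,\mathbb{R}^2_-)$ block equals the $(\mathbb{R}^2_+,\mathbb{R}^2_+)$ block, while the two mixed blocks are equal and, after the substitution $y\mapsto Ry$ with $|x-Ry|^2=(x_1-y_1)^2+(x_2+y_2)^2$, turn into the reflected kernel. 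Recombining yields
\[
\tfrac{1}{2p}\iint_{\mathbb{R}^2\times\mathbb{R}^2}\log|x-y|\,|v|^p(x)|v|^p(y)=\tfrac{1}{2p}\Big(2\!\int_{\mathbb{R}^2_+}\!\int_{\mathbb{R}^2_+}\!\log|x-y|\,|v|^p|v|^p+\!\int_{\mathbb{R}^2_+}\!\int_{\mathbb{R}^2_+}\!\log\big[(x_1-y_1)^2+(x_2+y_2)^2\big]\,|v|^p|v|^p\Big),
\]
which is precisely the nonlocal part of $\widetilde I$. Adding the two contributions gives $I(v)=\widetilde I(v|_{\mathbb{R}^2_+})$ for every $v\in X_{odd}$.

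Finally I would read this identity variationally. The odd extension $\Phi:\widetilde X\to X_{odd}$ (reflect $w$ oddly across $\{x_2=0\}$) is a linear bijection whose inverse is the restriction $v\mapsto v|_{\mathbb{R}^2_+}$; oddness supplies the vanishing trace needed for membership in $H^1_0(\mathbb{R}^2_+)$. Since $\widetilde I=I\circ\Phi$ and $\Phi$ is linear, the chain rule gives $\langle\widetilde I'(w),h\rangle=\langle I'(\Phi w),\Phi h\rangle$ for all $h\in\widetilde X$, so $w$ is a critical point of $\widetilde I$ iff $\Phi w$ is a critical point of $I$ on $X_{odd}$, and taking $h=w$ shows $\Phi(\widetilde{\mathcal N})=\mathcal N_{odd}$; hence $\inf_{\widetilde{\mathcal N}}\widetilde I=\inf_{\mathcal N_{odd}}I=c_{odd}$. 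If now $u$ is a minimal action odd solution, then $I'(u)=0$ and $u\in\mathcal N_{odd}$, so $u|_{\mathbb{R}^2_+}\in\widetilde{\mathcal N}$ and $\widetilde I(u|_{\mathbb{R}^2_+})=I(u)=c_{g,odd}=c_{odd}$ by Theorem~1.1(3); therefore $u|_{\mathbb{R}^2_+}$ attains $\inf_{\widetilde{\mathcal N}}\widetilde I$, as claimed.

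The main obstacle I anticipate is not the algebra of the four-block splitting but justifying it rigorously: one must interchange the finite sum over quadrants with integration against a kernel that is neither sign-definite nor bounded, which is why routing everything through the absolutely convergent pair $(V_1,V_2)$ is essential before invoking Fubini and the reflection change of variables.
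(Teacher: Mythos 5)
Your proposal is correct and follows essentially the same route as the paper, whose proof is just the remark that the identity is a ``direct computation'' using the equivalence $u \in X_{odd} \Leftrightarrow u\big|_{\mathbb{R}^2_+} \in \widetilde{X}$; you simply carry out that computation explicitly (evenness of $|\nabla v|^2$, $v^2$, $|v|^p$ in $x_2$, the four-block splitting with the reflection change of variables, and the transfer of the Nehari characterization through the odd-extension bijection together with Theorem 1.1(3)). Your extra care in routing the splitting through the absolutely convergent pair $(V_1,V_2)$ before invoking Fubini is a legitimate refinement of the paper's terse argument, not a different approach.
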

\begin{proof}
  The proofs are direct computations  and use the fact $u \in X_{odd}$ if and only if $u \big|_{\mathbb{R}^2_+}  \in \widetilde{X}. $
\end{proof}
From now on, we will freely view $u \in X_{odd}$ or $u \in \widetilde{X}(\mathbb{R}^2_+).$
Note that the existence of minimum problem for $  \widetilde{I}(u)= \inf_{\widetilde{\mathcal{N}}} \widetilde{I}(w)  $
have already been proved by the Theorem $1$. Also the minimal odd solution $u$ satisfies the new Euler-Lagrange equation:
\begin{eqnarray}\label{eq4.1}
-\Delta u +u + \frac{1}{2}H |u|^{p-2}u =0 \qquad \text{in} \; \mathbb{R}^2_+,
\end{eqnarray}
where $H=H_1+H_2$ is defined by
\begin{align*}
H_1(x)&:= 2 \int_{\mathbb{R}^2_+}  \big( \log |x-y|\big) |u|^p(y)dy, \\
H_2(x) &:= \int_{\mathbb{R}^2_+} \big[ \log \big((x_1 -y_1)^2 + (x_2+y_2)^2 \big) \big]  |u|^p(y)dy .
\end{align*}
Recall that we have shown in the Remark $2.4$ that 
$$ \frac{1}{2}H_2(x) - \log|x| \int_{\mathbb{R}^2_+} |u|^p \longrightarrow 0, \qquad \text{as} \; x \longrightarrow \infty.  $$
From this, we can see $H_2$ is at most $\log$ growth, as $H_1$ does. But we need to  give an explicit
boundedness in terms of $\|u\|_{\widetilde{X}}$ to show $H_2$ is  well-defined on the $\widetilde{X}$.
\begin{prop}
$H_2(x) = F(x)-G(x),$  where $ F,G$ are nonnegative functions bounded by  
\begin{align*}
 F(x) &\leq C \bigg( \int_{\mathbb{R}^2_+} |u|^p dy +  \int_{\mathbb{R}^2_+} |u|^p d{\mu}  +\log(1+|x|) \int_{\mathbb{R}^2_+} |u|^p d\mu  \bigg), \\
 G(x)  &\leq C\int_{\mathbb{R}^2_+} \frac{1}{|x-y|} |u|^p(y)dy.
 \end{align*}
\end{prop}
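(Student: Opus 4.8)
The plan is to recognize $H_2$ as a genuine logarithmic potential once we reflect across the boundary $\partial\mathbb{R}^2_+$, and then to split the logarithm into its large-scale and small-scale parts exactly as $B_0=B_1-B_2$ was split in the preliminaries. Write $\bar y:=(y_1,-y_2)$ for the reflection of $y$ across the $x_1$-axis. Since $(x_1-y_1)^2+(x_2+y_2)^2=|x-\bar y|^2$, we have $H_2(x)=2\int_{\mathbb{R}^2_+}\log|x-\bar y|\,|u|^p(y)\,dy$. Using the elementary identity $\log s=\log(1+s)-\log(1+s^{-1})$ for $s>0$ (the same decomposition underlying $B_1,B_2$), I would set
\[
F(x):=2\int_{\mathbb{R}^2_+}\log(1+|x-\bar y|)\,|u|^p(y)\,dy,\qquad G(x):=2\int_{\mathbb{R}^2_+}\log\Big(1+\tfrac{1}{|x-\bar y|}\Big)|u|^p(y)\,dy,
\]
so that $H_2=F-G$, and both $F,G\ge 0$ because $\log(1+\cdot)\ge 0$.

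For $G$ I would use $\log(1+t)\le t$ to get $G(x)\le 2\int_{\mathbb{R}^2_+}|x-\bar y|^{-1}|u|^p(y)\,dy$, and then invoke the one geometric fact special to the half-plane: for $x,y\in\mathbb{R}^2_+$ one has $(x_2+y_2)^2\ge (x_2-y_2)^2$, hence $|x-\bar y|\ge |x-y|$. This replaces $|x-\bar y|^{-1}$ by $|x-y|^{-1}$ and yields the stated bound $G(x)\le C\int_{\mathbb{R}^2_+}|x-y|^{-1}|u|^p(y)\,dy$ (with $C=2$); this last integral is finite since $|z|^{-1}$ is locally integrable in $\mathbb{R}^2$ and $u$ decays exponentially by Lemma~\ref{lem2.3}.

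For $F$ I would estimate $\log(1+|x-\bar y|)$ from above. From the triangle inequality and $|\bar y|=|y|$ we get $1+|x-\bar y|\le 1+|x|+|y|\le (1+|x|)(1+|y|)$, so $\log(1+|x-\bar y|)\le \log(1+|x|)+\log(1+|y|)$. Integrating against $|u|^p\,dy$ separates the two factors and produces the growth term $\log(1+|x|)\int_{\mathbb{R}^2_+}|u|^p\,dy$ together with $\int_{\mathbb{R}^2_+}\log(1+|y|)|u|^p\,dy=\int_{\mathbb{R}^2_+}|u|^p\,d\mu$, which is the stated estimate for $F$ (the honest leading coefficient of $\log(1+|x|)$ is $\int|u|^p\,dy$, matching the large-$|x|$ asymptotics $F(x)\sim 2\log(1+|x|)\,\|u\|_{L^p}^p$). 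A cleaner route to the exact three-term form is to split the domain into $\{|y|\le |x|\}$ and $\{|y|>|x|\}$ and bound $\log(1+|x-\bar y|)$ by $\log 2+\log(1+|x|)$ on the first piece and by $\log 2+\log(1+|y|)$ on the second.

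I expect the only genuinely non-formal point to be the half-plane geometric inequality $|x-\bar y|\ge |x-y|$: it is precisely what lets the singular part $G$ be controlled by the honest Riesz potential $\int|x-y|^{-1}|u|^p$ rather than its reflected version, and it is the step where the sign structure $x_2,y_2>0$ is used. Everything else is the $\log s=\log(1+s)-\log(1+1/s)$ bookkeeping together with $\log(1+a+b)\le \log(1+a)+\log(1+b)$. Finally I would note that the finiteness of all three integrals in terms of $\|u\|_{\widetilde{X}}=\|u\|_{H^1}+\|u\|_*$ is exactly what certifies that $H_2$, and hence $H=H_1+H_2$ and the Euler--Lagrange equation \eqref{eq4.1}, are well-defined on $\widetilde{X}$.
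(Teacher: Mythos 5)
Your proposal is correct, but it uses a genuinely different decomposition from the paper. The paper splits the \emph{domain}: writing $\bar y=(y_1,-y_2)$, it sets $\Omega_1(x)=\{y\in\mathbb{R}^2_+:|x-\bar y|\le 1\}$, takes $F$ to be the integral over $\Omega_1^c(x)$ (where the logarithm is nonnegative) and $-G$ the integral over $\Omega_1(x)$ (where it is nonpositive), so nonnegativity of $F,G$ comes from the sign of $\log$ on each region. You instead split the \emph{integrand} via $\log s=\log(1+s)-\log(1+1/s)$, exactly mirroring the $B_1$/$B_2$ splitting of Section 2, so your $F$ and $G$ are integrals over all of $\mathbb{R}^2_+$ of nonnegative integrands. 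Both routes rest on the same two ingredients: the reflection identity $(x_1-y_1)^2+(x_2+y_2)^2=|x-\bar y|^2$ and the half-plane inequality $|x-\bar y|\ge|x-y|$ (the paper uses it inside $\Omega_1(x)$, you use it after $\log(1+t)\le t$), which is what lets the singular part be dominated by $\int_{\mathbb{R}^2_+}|x-y|^{-1}|u|^p\,dy$. Your far-field estimate $1+|x-\bar y|\le(1+|x|)(1+|y|)$ is a tidier route than the paper's factorization $\log\big((1+(x_1-y_1)^2)(1+(x_2+y_2)^2)\big)$ followed by term-by-term bounds; what your splitting buys is alignment with the $V_1,V_2$ bookkeeping and an explicit constant ($C=2$) for $G$, while the paper's splitting buys disjoint supports for $F$ and $G$. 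Since the proposition only asserts the existence of \emph{some} decomposition with these bounds, your different choice of $F,G$ is perfectly admissible.

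One caveat, which you noticed and which is not a defect of your argument: your estimate for $F$ ends with the term $\log(1+|x|)\int_{\mathbb{R}^2_+}|u|^p\,dy$, whereas the stated bound has $\log(1+|x|)\int_{\mathbb{R}^2_+}|u|^p\,d\mu$. The paper's own chain produces exactly the same $dy$-weighted term (its step bounding $\int\log\big((1+|x|)(1+|y|)\big)|u|^p\,dy$ yields $\log(1+|x|)\int|u|^p\,dy+\int|u|^p\,d\mu$, not the $d\mu$-weighted version; with a constant independent of $u$ the $d\mu$-weighted version is in fact unattainable, as one sees by concentrating $u$ near the origin). So the third term in the statement should be read with $dy$, which is what both you and the paper actually prove, and which suffices for the purpose of the proposition: $H_2$ grows at most logarithmically with coefficients controlled by $\|u\|_{\widetilde{X}}$, so equation \eqref{eq4.1} is well defined on $\widetilde{X}$.
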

\begin{proof}
This is a regular computation. First we define spherical cap over the upper half plane:
$$\Omega_1(x) := \big\{y=(y_1,y_2) \in \mathbb{R}^2_+ : |x_1-y_1|^2 +|x_2 +y_2|^2 \leq 1 \big\},\quad \Omega^{c}_1(x):= \mathbb{R}^2_+ \setminus \Omega_1(x).  $$
Then we have
\begin{align*}
H_2(x) &:=   \bigg( \int_{\Omega^{c}_1(x) } 
          + \int_{\Omega_1(x) }  \bigg)  \big[ \log \big((x_1 -y_1)^2 + (x_2+y_2)^2 \big) \big]  |u|^p(y)dy \\
       &= F(x)-G(x).
\end{align*}
We estimate $F(x)$ and $G(x)$ in the following way:
\begin{align*}
F(x) & =  \int_{\Omega^{c}_1(x) }  \big[ \log \big((x_1 -y_1)^2 + (x_2+y_2)^2 \big) \big]  |u|^p(y)dy \\
   & \leq  \int_{\Omega^{c}_1(x) }   \big[ \log \big( (1+ (x_1 -y_1)^2)(1+ (x_2+y_2)^2 )\big) \big]  |u|^p(y)dy\\
   & \leq  \int_{\Omega^{c}_1(x) }   \big[ \log \big( 1+ |x_1 -y_1|^2\big) \big] |u|^p(y)dy   
        +  \int_{\Omega^{c}_1(x) }   \big[ \log \big( 1+ |x_2+y_2|^2\big) \big] |u|^p(y)dy  \\
   & \leq  \int_{\Omega^{c}_1(x) }  \big[ \log  2 (1+ |x_1 -y_1|)^2  \big] |u|^p(y)dy 
        +  \int_{\Omega^{c}_1(x) }  \big[ \log  2 (1+ |x_2 +y_2|)^2  \big] |u|^p(y)dy  \\
   &\leq C  \int_{\Omega^{c}_1(x) }  |u|^p(y)dy + C   \int_{\Omega^{c}_1(x) }   \big[ \log  \big( (1+ |x -y|) \big) \big] |u|^p(y)dy \\
    &\ \  \  +C \int_{\Omega^{c}_1(x) }    \big[ \log  \big( (1+ |x|)(1+ |y|) \big) \big] |u|^p(y)dy  \\
         & \leq  C \bigg( \int_{\mathbb{R}^2_+} |u|^p dy +  \int_{\mathbb{R}^2_+} |u|^p d{\mu}  +\log(1+|x|) \int_{\mathbb{R}^2_+} |u|^p d\mu  \bigg).
\end{align*}
For $G(x)$ we have
\begin{align*}
G(x) &=  \int_{\Omega_1(x) }  \bigg( \log   \frac{1}{  (x_1 -y_1)^2 + (x_2+y_2)^2   }   \bigg) |u|^p(y)dy \\
     & \leq  \int_{\Omega_1(x) }  \bigg( \log   \frac{1}{  (x_1 -y_1)^2 + (x_2-y_2)^2   }   \bigg) |u|^p(y)dy \\
     & \leq  C\int_{\mathbb{R}^2_+} \frac{1}{|x-y|} |u|^p(y)dy.
\end{align*}
This is done.
	\end{proof}
Now we can prove
\begin{prop}
	If $u$ is the minimal odd solution, then $u >0$ or $u<0$ in $\mathbb{R}^2_+.$
\end{prop}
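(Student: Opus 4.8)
The plan is to show that the modulus $|u|$ is itself a minimizer of $\widetilde{I}$ on $\widetilde{\mathcal{N}}$, to deduce that it solves the same equation \eqref{eq4.1}, and then to force it to be strictly positive by a maximum principle; the constant sign of $u$ then follows from continuity and connectedness. For the first step I would note that $\widetilde{I}$ is invariant under $v\mapsto |v|$: every term of $\widetilde{I}(v)$ depends on $v$ only through $|\nabla v|$, $|v|$ and $|v|^p$, and $|\nabla |v|| = |\nabla v|$ a.e. for $v\in H^1$. Since $v\mapsto|v|$ maps $H^1_0(\mathbb{R}^2_+)$ into itself, we get $|u|\in\widetilde{X}$ and $\widetilde{I}(|u|)=\widetilde{I}(u)$; applying the same invariance to the Nehari functional gives $\langle \widetilde{I}'(|u|),|u|\rangle=\langle \widetilde{I}'(u),u\rangle=0$, so $|u|\in\widetilde{\mathcal{N}}$ and $\widetilde{I}(|u|)=\inf_{\widetilde{\mathcal{N}}}\widetilde{I}$. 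Hence $|u|$ is again a minimal odd solution in the variational sense.

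By the natural-constraint property of the Nehari manifold established in the present framework (this is exactly the content behind $c_{g,odd}=c_{odd}$ in Theorem $1.1$, see also \cite{CW,DZ}), a minimizer of $\widetilde{I}$ on $\widetilde{\mathcal{N}}$ is a free critical point, hence a weak solution of \eqref{eq4.1}. The key point is that the potential $H=H_1+H_2$ depends on the solution only through the quantity $|u|^p$, which is unchanged when $u$ is replaced by $|u|$; therefore $|u|$ solves \eqref{eq4.1} with exactly the same $H$. Invoking the regularity of Lemma \ref{lem2.3}(4), $|u|\in C^{2,\alpha}_{loc}(\mathbb{R}^2_+)$ is a classical solution.

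Setting $v:=|u|\ge 0$ with $v\not\equiv 0$, we may rewrite \eqref{eq4.1} as the linear equation $\Delta v = c(x)\,v$ with $c(x)=1+\tfrac12 H(x)\,v^{p-2}(x)$. From the preceding proposition (the decomposition $H_2=F-G$ with its explicit bounds) together with the $\log$-type bound on $H_1$, the potential $H$ is continuous and locally bounded; since $u\in C^{2,\alpha}_{loc}$ and $p\ge 2$, the coefficient $c$ is continuous, hence locally bounded. On any ball $B\Subset\mathbb{R}^2_+$ I would choose $M>0$ so large that $c+M\ge 0$ on $B$, whence $-\Delta v+(c+M)v = Mv\ge 0$; thus $v$ is a nonnegative supersolution of the operator $-\Delta+(c+M)$, whose zeroth-order coefficient is nonnegative. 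The strong maximum principle then yields either $v>0$ on $B$ or $v\equiv 0$ on $B$, and since the set $\{v=0\}$ would be both open and closed in the connected domain $\mathbb{R}^2_+$, the second alternative propagates to $v\equiv 0$, contradicting $v\not\equiv 0$. Therefore $|u|>0$ throughout $\mathbb{R}^2_+$, so $u$ never vanishes there, and being continuous on the connected open set $\mathbb{R}^2_+$ it has a fixed sign, i.e. $u>0$ or $u<0$.

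The step I expect to be the main obstacle is the application of the maximum principle: the potential $H$ is genuinely sign-indefinite, having only logarithmic growth and taking negative values, so $c(x)$ has no definite sign and the strong maximum principle is not directly available. The remedy is precisely the localization-plus-shift device above, and the local bounds on $H$ coming from the previous proposition are exactly what make the constant shift $M$ admissible. A secondary point to handle with care is the passage from ``$|u|$ is a Nehari minimizer'' to ``$|u|$ solves \eqref{eq4.1}'', which relies on the Nehari manifold being a natural constraint in this setting.
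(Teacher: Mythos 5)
Your proposal is correct and follows essentially the same route as the paper: show that $|u|$ is again a minimizer of $\widetilde{I}$ on $\widetilde{\mathcal{N}}$ (using invariance of the functional under $v\mapsto|v|$), conclude that $|u|$ solves \eqref{eq4.1} with the same potential $H$, and then apply the strong maximum principle to get $|u|>0$, whence $u$ has constant sign in $\mathbb{R}^2_+$. Your localization-plus-shift device is just an explicit implementation of what the paper invokes as ``the maximum principle of Serrin,'' so the two arguments coincide in substance.
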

\begin{proof}
By the characterzation of $  \widetilde{I}(u)= \inf_{\widetilde{\mathcal{N}}} \widetilde{I}(w) $ for $u \not \equiv 0$, we see $|u|$ is also the minimum for the new action
functional $\widetilde{I}$, and satisfies the new Euler-Lagrange equation \eqref{eq4.1}. Applying the maximum principle of Serrin, $|u| >0.$ So $u$ has constant sign in the upper 
halfplane $ \mathbb{R}^2_+.$
	\end{proof}
\par
Based on the semilinear elliptic equation \eqref{eq4.1} and the positivity of $u$ in $\mathbb{R}^2_+$,  we will use the method of moving plane carefully to deduce the symmetry property of the solutions. First we fix some solution $u$ and all the constants  below will depend on this solution $u$, but independent of the moving plane $T_{\lambda}.$
To carry out the method of moving plane, we define $T_{\lambda} := \big\{ x=(x_1,x_2) \in \mathbb{R}^2_+ : x_1= \lambda \big\} $, 
where we will move $ T_{\lambda} $ from $ \lambda = - \infty$ to some limiting position. Define $ \Sigma_{\lambda} $  is the left part of $T_{\lambda}:$
$ \Sigma_{\lambda}:=  \big\{ x=(x_1,x_2) \in \mathbb{R}^2_+ : x_1< \lambda \big\} .$ Let  $x^{\lambda}$ be the reflection point with respect to $T_{\lambda}$ for the point
 $x \in \Sigma_{\lambda}$: $ x^{\lambda} =(2\lambda-x_1,x_2) =(x_1^{\lambda},x_2^{\lambda})$. We will compare the values of $u$ at 
 the points $x^{\lambda}$ and $ x.$ For this, we let $u_{\lambda}(x) :=u (x^{\lambda}),$ $ w_{\lambda}:= u_{\lambda} -u ,$
and $ L_{\lambda}:=H_1(x^{\lambda}) -H_1(x) ,$  $ M_{\lambda}(x):= H_2(x^{\lambda})  - H_2(x).$ We first need the integral representation of $L_{\lambda}$ and $M_{\lambda}.$
\begin{prop}
For $ x \in \Sigma_{\lambda},$ we have
\begin{align*}
L_{\lambda}(x) &=  2 \int_{\Sigma_{\lambda}}  \bigg( \log \frac{|x-y|}{|x-y^{\lambda}|} \bigg) \bigg( |u_{\lambda}|^p -|u|^p   \bigg)  dy ,\\
M_{\lambda}(x) &=  \int_{\Sigma_{\lambda}}     \bigg( \log \frac{|x_1-y_1|^2+|x_2+y_2|^2}{|x_1-y^{\lambda}_1|^2+ |x_2+y_2|^2 } \bigg) \bigg( |u_{\lambda}|^p -|u|^p   \bigg)  dy.
\end{align*}
\end{prop}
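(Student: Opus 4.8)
The plan is to obtain both identities from one ``fold across $T_\lambda$'' computation. The reflection $R_\lambda(x):=x^\lambda=(2\lambda-x_1,x_2)$ is an involutive isometry of $\mathbb{R}^2$ that preserves $\mathbb{R}^2_+$, leaves the second coordinate untouched, and carries $\Sigma_\lambda$ onto $\mathbb{R}^2_+\setminus\overline{\Sigma_\lambda}$ up to a null set. The point is that the two kernels
$$k_1(x,y)=\log|x-y|,\qquad \kappa(x,y)=(x_1-y_1)^2+(x_2+y_2)^2$$
entering $H_1$ and $H_2$ both satisfy the double-reflection invariance $k_1(x^\lambda,y^\lambda)=k_1(x,y)$ and $\kappa(x^\lambda,y^\lambda)=\kappa(x,y)$, together with the single-reflection exchange $|x^\lambda-y|=|x-y^\lambda|$ and $\kappa(x^\lambda,y)=\kappa(x,y^\lambda)$; the latter holds because $x_2+y_2$ is unaffected by $R_\lambda$ while $x_1-y_1$ only changes sign.

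For $L_\lambda$ I would write $L_\lambda(x)=2\int_{\mathbb{R}^2_+}\bigl(\log|x^\lambda-y|-\log|x-y|\bigr)|u|^p(y)\,dy$ and split the domain as $\mathbb{R}^2_+=\Sigma_\lambda\cup(\mathbb{R}^2_+\setminus\overline{\Sigma_\lambda})$. On the second piece I substitute $y=z^\lambda$ with $z\in\Sigma_\lambda$; since $|u|^p(z^\lambda)=|u_\lambda|^p(z)$, $dy=dz$, and the isometry gives $|x^\lambda-z^\lambda|=|x-z|$, this piece becomes $2\int_{\Sigma_\lambda}\log\frac{|x-z|}{|x-z^\lambda|}|u_\lambda|^p\,dz$. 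On the first piece the exchange identity $|x^\lambda-y|=|x-y^\lambda|$ turns the integrand into $-2\log\frac{|x-y|}{|x-y^\lambda|}|u|^p$. Adding the two pieces produces the stated representation of $L_\lambda$.

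For $M_\lambda$ the same fold applies verbatim with $\kappa$ in place of $\log|\cdot|$. Starting from $M_\lambda(x)=\int_{\mathbb{R}^2_+}\bigl(\log\kappa(x^\lambda,y)-\log\kappa(x,y)\bigr)|u|^p\,dy$, the substitution $y=z^\lambda$ on $\mathbb{R}^2_+\setminus\overline{\Sigma_\lambda}$ together with $\kappa(x^\lambda,z^\lambda)=\kappa(x,z)$ gives $\int_{\Sigma_\lambda}\log\frac{\kappa(x,z)}{\kappa(x,z^\lambda)}|u_\lambda|^p\,dz$, where $\kappa(x,z^\lambda)=(x_1-z_1^\lambda)^2+(x_2+z_2)^2$ is precisely the denominator in the claim. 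On $\Sigma_\lambda$ the identity $\kappa(x^\lambda,y)=\kappa(x,y^\lambda)$ yields $-\int_{\Sigma_\lambda}\log\frac{\kappa(x,y)}{\kappa(x,y^\lambda)}|u|^p\,dy$, and summing the two pieces gives exactly the stated formula for $M_\lambda$ (note that, unlike $L_\lambda$, no prefactor $2$ survives, since $H_2$ already carries the squared distance $\kappa$ rather than $\log|\cdot|$).

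The computation is essentially bookkeeping, so I expect no conceptual obstacle; the only care needed is to justify the domain splitting and change of variables, i.e. the absolute integrability of the kernels against the density $|u|^p$. This follows from the $\log$-type growth bound on $H_2$ from the previous proposition and the analogous bound for $H_1$, combined with the continuity and exponential decay of $u$ in Lemma $2.3$ (which also tame the integrable logarithmic singularity at $y=x$ in dimension two). The one place where the upper-half-plane geometry genuinely enters is the invariance of the image term $x_2+y_2$ under the $T_\lambda$-reflection, which is what makes the exchange identity $\kappa(x^\lambda,y)=\kappa(x,y^\lambda)$ — and hence the clean form of $M_\lambda$ — hold.
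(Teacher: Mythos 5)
Your proof is correct and follows essentially the same route as the paper's: decompose the half-plane integral into $\Sigma_\lambda$ and its complement, pull the complement back to $\Sigma_\lambda$ by the reflection $y\mapsto y^\lambda$, and use the kernel identities $|x^\lambda-y^\lambda|=|x-y|$, $|x^\lambda-y|=|x-y^\lambda|$ (and their analogues for the image kernel $(x_1-y_1)^2+(x_2+y_2)^2$, which are valid precisely because the reflection leaves $x_2+y_2$ unchanged). The only organizational difference is that you form the difference $H_i(x^\lambda)-H_i(x)$ before splitting the domain while the paper decomposes $H_i(x)$ first and then substitutes; the computations are identical.
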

\begin{proof}
We check it directly. 
\begin{align*}
H_1(x)&= 2 \int_{\mathbb{R}^2_+}  \big( \log |x-y|\big) |u|^p(y)dy \\
      &= 2 \int_{\Sigma_{\lambda}}  \big( \log |x-y|\big) |u|^p(y)dy  + 2  \int_{ \mathbb{R}^2_+ \setminus  \Sigma_{\lambda}}  \big( \log |x-y|\big) |u|^p(y)dy \\
      &= 2 \int_{\Sigma_{\lambda}}  \big( \log |x-y|\big) |u|^p(y)dy + 2 \int_{\Sigma_{\lambda}}  \big( \log |x-y^{\lambda}|\big) |u_{\lambda}|^p(y)dy \\
H_2(x) &:= \int_{\mathbb{R}^2_+} \big[ \log \big((x_1 -y_1)^2 + (x_2+y_2)^2 \big) \big]  |u|^p(y)dy \bigg);\\
       &= \int_{\Sigma_{\lambda}} \big[ \log \big((x_1 -y_1)^2 + (x_2+y_2)^2 \big) \big]  |u|^p(y)dy  \\
       & \ \ \ +  
          \int_{\mathbb{R}^2_+ \setminus \Sigma_{\lambda}} \big[ \log \big((x_1 -y_1)^2 + (x_2+y_2)^2 \big) \big]  |u|^p(y)dy \\
       &=  \int_{\Sigma_{\lambda}} \big[ \log \big((x_1 -y_1)^2 + (x_2+y_2)^2 \big) \big]  |u|^p(y)dy  \\
       & \ \ \ +
           \int_{\Sigma_{\lambda}} \big[ \log \big((x_1 -y_1^{\lambda})^2 + (x_2+y_2)^2 \big) \big]  |u_{\lambda}|^p(y)dy .
\end{align*}
	Substituted into  $ L_{\lambda}:=H_1(x^{\lambda}) -H_1(x) ,$  $ M_{\lambda}(x):= H_2(x^{\lambda})  - H_2(x),$ yield the integral representations. 
\end{proof}
\begin{proof}[Proof of Theorem $1.2$]
By the Euler-Lagrange equation \eqref{eq4.1}, we know $  w_{\lambda}= u_{\lambda} -u $ satisfies the equation
\begin{eqnarray}\label{eq4.2}
 -\Delta w_{\lambda} + w_{\lambda} + \frac{1}{2} L_{\lambda} |u_{\lambda}|^{p-2} u_{\lambda} +
 \frac{1}{2} M_{\lambda} |u_{\lambda}|^{p-2} u_{\lambda} + \frac{1}{2}(p-1) H(x) |\psi_{\lambda}|^{p-2} w_{\lambda} =0 ,
 \end{eqnarray}	
where $ \psi_{\lambda}  $ between $u_{\lambda}$ and $ u.$ We define the negative part  $\Sigma_{\lambda}^-$ of $w_{\lambda}$ in $\Sigma_{\lambda}$ by
$$ \Sigma_{\lambda}^- := \bigg \{  x \in \Sigma_{\lambda}: w_{\lambda}(x) =u_{\lambda}(x)-u(x) <0 \bigg\}. $$
Our aim is to show this set is empty $ \Sigma_{\lambda} = \emptyset $ to give $w_{\lambda} =u_{\lambda} -u \geq 0 $, until $T_{\lambda}$ arrive at some limiting position $\lambda_0$, 
which we will have $  w_{\lambda_0}(x) =u_{\lambda_0}(x)-u(x) =0  $, the desired symmetry property. We divide this process of moving 
plane into two steps.\\
\textbf{Step 1:} Start moving the plane from $ \lambda =-\infty.$   \\
We multiply the equation \eqref{eq4.2} by $w_{\lambda} $, and integrate over $ \Sigma_{\lambda}^-.$
Notice that on the set $\Sigma_{\lambda}^-,$ $ 0< u_{\lambda} \leq \psi_{\lambda} \leq u .$ We have
\begin{align*}
\int_{\Sigma_{\lambda}^-} |\nabla w^-_{\lambda} |^2  + \int_{\Sigma_{\lambda}^-} |w^-_{\lambda}| ^2 &= 
- \frac{1}{2} \int_{\Sigma_{\lambda}^-} L_{\lambda} |u_{\lambda}|^{p-2} u_{\lambda} w_{\lambda} 
-\frac{1}{2} \int_{\Sigma_{\lambda}^-} M_{\lambda} |u_{\lambda}|^{p-2} u_{\lambda} w_{\lambda} \\
   &\ \ \ \  - \int_{\Sigma_{\lambda}^-}  \frac{1}{2}(p-1) H(x) |\psi_{\lambda}|^{p-2} (w_{\lambda})^2.
\end{align*}
Since $H(x) \longrightarrow  + \infty,$ we choose $\lambda$ negative enough, such that 
$ \int_{\Sigma_{\lambda}^-} H(x) |\psi_{\lambda}|^{p-2} (w^-_{\lambda})^2  \geq 0 . $
Then we get
\begin{align}\label{eq4.4}
\int_{\Sigma_{\lambda}^-} |\nabla w^-_{\lambda} |^2  + \int_{\Sigma_{\lambda}^-} |w^-_{\lambda}| ^2  
&\leq  \frac{1}{2} \int_{\Sigma_{\lambda}^-}  L_{\lambda}    |u_{\lambda}|^{p-2} u_{\lambda} w^-_{\lambda} +
       \frac{1}{2}  \int_{\Sigma_{\lambda}^-}  M_{\lambda}    |u_{\lambda}|^{p-2} u_{\lambda} w^-_{\lambda}\nonumber \\ 
 & \leq \frac{1}{2} \int_{\Sigma_{\lambda}^-}  L_{\lambda}^+    |u_{\lambda}|^{p-2} u_{\lambda} w^-_{\lambda} +
 \frac{1}{2}  \int_{\Sigma_{\lambda}^-}  M_{\lambda}^+    |u_{\lambda}|^{p-2} u_{\lambda} w^-_{\lambda}.
\end{align}
Now we estimate $L_{\lambda}^+$ and $M_{\lambda}^+$ separately using the integral representations of them. For $x \in \Sigma_{\lambda}^-,$ we have
\begin{align*}
L_{\lambda}(x)  &=  2\int_{\Sigma_{\lambda}}  \bigg( \log \frac{|x-y|}{|x-y^{\lambda}|} \bigg) \bigg( |u_{\lambda}|^p -|u|^p   \bigg)  dy   \\
    &=  2\int_{\Sigma_{\lambda}^+}  \bigg( \log \frac{|x-y|}{|x-y^{\lambda}|} \bigg) \bigg( |u_{\lambda}|^p -|u|^p   \bigg)   
    + 2 \int_{\Sigma_{\lambda}^-}  \bigg( \log \frac{|x-y|}{|x-y^{\lambda}|} \bigg) \bigg( |u_{\lambda}|^p -|u|^p   \bigg) ;    \\
L^+_{\lambda}(x)    &\leq  2 \int_{\Sigma_{\lambda}^-}  \bigg( \log \frac{|x-y|}{|x-y^{\lambda}|} \bigg) \bigg( |u_{\lambda}|^p -|u|^p   \bigg) dy\\
    & =  2\int_{\Sigma_{\lambda}^-}  \bigg( \log \frac{|x-y^{\lambda}|}{|x-y|} \bigg) \bigg( |u|^p - |u_{\lambda}|^p   \bigg) dy \\
    & \leq 2 \int_{\Sigma_{\lambda}^-}  \bigg( \log \big( 1+ \frac{|y^{\lambda}-y|}{|x-y|} \big) \bigg)   \bigg( |u|^p - |u_{\lambda}|^p   \bigg) dy \\
    &\leq C \int_{\Sigma_{\lambda}^-}  \bigg( \log \big( 1+ \frac{2|\lambda-y_1|}{|x-y|} \big) \bigg)    |\phi_{\lambda}|^{p-2} \phi_{\lambda} w^-_{\lambda} dy \\
    &\leq C \int_{\Sigma_{\lambda}^-}  \frac{1}{|x-y|} (\lambda-y_1) |\phi_{\lambda}|^{p-1} w^-_{\lambda} dy\\
    & \leq C  \int_{\Sigma_{\lambda}^-}  \frac{1}{|x-y|} (\lambda-y_1)  |u|^{p-1} w^-_{\lambda} dy.
\end{align*}
So by the Hardy-Littlewood-Sobolev inequality, we have
\begin{align*}
\big\|  L_{\lambda}^+ \big \|_{L^4(\Sigma_{\lambda}^-)} &\leq C \big\| (\lambda-y_1) |u|^{p-1} w^-_{\lambda}     \big\|_{L^{\frac{4}{3}}(\Sigma_{\lambda}^-)}\\
& \leq C \big\| (\lambda-y_1) |u|^{p-1}\big\|_{L^4(\Sigma_{\lambda}^-)}  \big\|  w^-_{\lambda} \big\|_{L^2(\Sigma_{\lambda}^-)}.
\end{align*}
Hence 
\begin{align*}
\int_{\Sigma_{\lambda}^-}  L_{\lambda}^+    |u_{\lambda}|^{p-2} u_{\lambda} w^-_{\lambda}
 & \leq C \big\|L_{\lambda}^+ \big \|_{L^4(\Sigma_{\lambda}^-)}   \big\|u_{\lambda}^{p-1} \big\|_{L^4(\Sigma_{\lambda}^-)} 
\big\|w^-_{\lambda} \big\|_{L^2(\Sigma_{\lambda}^-)} \\
& \leq C \big\| (\lambda-y_1) |u|^{p-1}\big\|_{L^4(\Sigma_{\lambda}^-)} \big\|u^{p-1} \big\|_{L^4(\Sigma_{\lambda}^-)} 
\big\|w^-_{\lambda} \big\|^2_{L^2(\Sigma_{\lambda}^-)}  .
\end{align*}
Since $u$ is exponential decay, we have $  \big\| (\lambda-y_1) |u|^{p-1}\big\|_{L^4(\Sigma_{\lambda}^-)} \longrightarrow 0  $ 
as $ \lambda \to -\infty,$  also for $ \big\|u^{p-1} \big\|_{L^4(\Sigma_{\lambda}^-)} . $ Choose $\lambda $ negative enough again, then we have
\begin{align*}
\int_{\Sigma_{\lambda}^-}  L_{\lambda}^+  |u_{\lambda}|^{p-2} u_{\lambda} w^-_{\lambda} \leq \frac{1}{8} \big\|w^-_{\lambda} \big\|^2_{L^2(\Sigma_{\lambda}^-)}.
\end{align*}
We estimate the $M_{\lambda}^+$ in the following way.
\begin{align*}
 M_{\lambda} &=  2 \int_{\Sigma_{\lambda}}     \bigg( \log \frac{|x_1-y_1|^2+|x_2+y_2|^2}{|x_1-y^{\lambda}_1|^2+ |x_2+y_2|^2 } \bigg)
 \bigg( |u_{\lambda}|^p -|u|^p   \bigg) dy   \\
 &= 2 \int_{\Sigma_{\lambda}^+}  \bigg( \log  \frac{|x_1-y_1|^2+|x_2+y_2|^2}{|x_1-y^{\lambda}_1|^2+ |x_2+y_2|^2 } \bigg) \bigg( |u_{\lambda}|^p -|u|^p   \bigg) dy \\
  &\ \ \  +  2 \int_{\Sigma_{\lambda}^-}  \bigg( \log  \frac{|x_1-y_1|^2+|x_2+y_2|^2}{|x_1-y^{\lambda}_1|^2+ |x_2+y_2|^2 } \bigg) \bigg( |u_{\lambda}|^p -|u|^p   \bigg) dy ;  \\
M_{\lambda}^+    &\leq  2 \int_{\Sigma_{\lambda}^-}  \bigg( \log  \frac{|x_1-y_1|^2+|x_2+y_2|^2}{|x_1-y^{\lambda}_1|^2+ |x_2+y_2|^2 } \bigg) \bigg( |u_{\lambda}|^p -|u|^p   \bigg) dy     \\
  &=  2 \int_{\Sigma_{\lambda}^-}  \bigg( \log  \frac{|x_1-y_1^{\lambda}|^2+|x_2+y_2|^2}{|x_1-y_1|^2+ |x_2+y_2|^2 } \bigg) \bigg( 
   |u|^p -|u_{\lambda}|^p  \bigg) dy   \\
  &\leq C   \int_{\Sigma_{\lambda}^-}  \bigg( \log  \frac{|x_1-y_1^{\lambda}|^2+|x_2+y_2|^2}{|x_1-y_1|^2+ |x_2+y_2|^2 } \bigg) 
  \bigg( |\eta_{\lambda}|^{p-2}\eta_{\lambda} w^-_{\lambda} \bigg) dy   \\
  &\leq C  \int_{\Sigma_{\lambda}^-}  \bigg( \log \big( 1 + \frac{|x_1-y_1^{\lambda}|^2 - |x_1-y_1|^2}{|x_1-y_1|^2+ |x_2+y_2|^2 } \big)\bigg) 
  \bigg( |\eta_{\lambda}|^{p-2}\eta_{\lambda} w^-_{\lambda} \bigg) dy  \\
  &\leq C  \int_{\Sigma_{\lambda}^-}  \bigg( \log \big( 1 + \frac{|x_1-y_1^{\lambda}|^2 - |x_1-y_1|^2}{|x-y|^2 } \big)\bigg) 
  \bigg( |\eta_{\lambda}|^{p-2}\eta_{\lambda} w^-_{\lambda} \bigg) dy \\
  &\leq C  \int_{\Sigma_{\lambda}^-}  \bigg( \log \big( 1 + \frac{\sigma |x_1-y_1|^2 + C_{\sigma}|y_1^{\lambda}-y_1 |^2}{|x-y|^2 } \big)\bigg) 
  \bigg( |\eta_{\lambda}|^{p-2}\eta_{\lambda} w^-_{\lambda} \bigg) dy \\
  &\leq C \int_{\Sigma_{\lambda}^-}   \frac{ \big(\sigma |x_1-y_1|^2 + C_{\sigma} |y_1^{\lambda}-y_1 |^2|  \big) ^{\frac{1}{2}}}{|x-y| }
  \bigg( |\eta_{\lambda}|^{p-2}\eta_{\lambda} w^-_{\lambda} \bigg) dy \\
  &\leq C \int_{\Sigma_{\lambda}^-}   \frac{  \sigma^{\frac{1}{2}} |x_1-y_1| + C_{\sigma} |y_1^{\lambda}-y_1 | }{|x-y| }
    \bigg( |\eta_{\lambda}|^{p-2}\eta_{\lambda} w^-_{\lambda} \bigg) dy \\
   &\leq C \sigma^{\frac{1}{2}}  \int_{\Sigma_{\lambda}^-} |u|^{p-1}  w^-_{\lambda}dy + C_{\sigma} \int_{\Sigma_{\lambda}^-}  
   \frac{1}{|x-y|}  (\lambda-y_1)|u|^{p-1}  w^-_{\lambda} dy\\
   &\leq C \sigma^{\frac{1}{2}}  \big\|u^{p-1} \big\|_{L^2(\Sigma_{\lambda}^-)} 
   \big\|w^-_{\lambda} \big\|_{L^2(\Sigma_{\lambda}^-)}   + C_{\sigma} N_{\lambda}(x).
\end{align*}
So we have the estimates for the second term in \eqref{eq4.4}:
\begin{align*}
 \int_{\Sigma_{\lambda}^-}  M_{\lambda}^+    |u_{\lambda}|^{p-2} u_{\lambda} w^-_{\lambda}
 &\leq C \sigma^{\frac{1}{2}} \big\|u^{p-1} \big\|_{L^2(\Sigma_{\lambda}^-)} 
 \big\|w^-_{\lambda} \big\|_{L^2(\Sigma_{\lambda}^-)}   \int_{\Sigma_{\lambda}^-} |u|^{p-1}  w^-_{\lambda}dy \\
 &\ \ \  +C_{\sigma}   \int_{\Sigma_{\lambda}^-} N_{\lambda}  |u|^{p-1}  w^-_{\lambda}dy \\
  &\leq C \sigma^{\frac{1}{2}} \big\|u^{p-1} \big\|^2_{L^2(\Sigma_{\lambda}^-)} \big\|w^-_{\lambda} \big\|^2_{L^2(\Sigma_{\lambda}^-)} \\
  &\ \ \  + C_{\sigma}  \big\|N_{\lambda} \big\|_{L^4(\Sigma_{\lambda}^-)}   \big\|u^{p-1} \big\|_{L^4(\Sigma_{\lambda}^-)} 
  \big\|w^-_{\lambda} \big\|_{L^2(\Sigma_{\lambda}^-)} \\
  &\leq C \sigma^{\frac{1}{2}} \big\|u^{p-1} \big\|^2_{L^2(\Sigma_{\lambda}^-)} \big\|w^-_{\lambda} \big\|^2_{L^2(\Sigma_{\lambda}^-)} \\
  &\ \ \  + C_{\sigma} \big\| (\lambda-y_1) |u|^{p-1}\big\|_{L^4(\Sigma_{\lambda}^-)} \big\|u^{p-1} \big\|_{L^4(\Sigma_{\lambda}^-)} 
  \big\|w^-_{\lambda} \big\|^2_{L^2(\Sigma_{\lambda}^-)} .
\end{align*}
Now again taking $\sigma \to 0,$ $\lambda \to -\infty,$ we get 
$$ \int_{\Sigma_{\lambda}^-}  M_{\lambda}^+    |u_{\lambda}|^{p-2} u_{\lambda} w^-_{\lambda} \leq \frac{1}{8}  
\big\|w^-_{\lambda} \big\|^2_{L^2(\Sigma_{\lambda}^-)} .$$
Combining these two estimates, we arrive at
$$  \int_{\Sigma_{\lambda}^-} |\nabla w^-_{\lambda} |^2  + \int_{\Sigma_{\lambda}^-} |w^-_{\lambda}| ^2 \leq \frac{1}{4}
 \big\|w^-_{\lambda} \big\|^2_{L^2(\Sigma_{\lambda}^-)}.  $$
 It follows $  \int_{\Sigma_{\lambda}^-}   |w^-_{\lambda}| ^2 =0.   $ Then the set $ \Sigma_{\lambda}^- $ is of measure zero:
  $ \mathcal{L}^n (\Sigma_{\lambda}^-) =0 .$  Since if it has positive measure, say $ \mathcal{L}^n (\Sigma_{\lambda}^-)\geq 2 \delta, $ 
  then we can choose a compact subset $K \subset \Sigma_{\lambda}^-$ such that $  \mathcal{L}^n (K) \geq \delta  .$ 
  Notice the definition of the set $ \Sigma_{\lambda}^-  .$  We get $ w^-_{\lambda} $ is positive in $ \Sigma_{\lambda}^- $, hence has a positive 
  lower bound on $K$ by continuity, say $ w^-_{\lambda} \geq \kappa >0. $  Then $ \int_{\Sigma_{\lambda}^-}   |w^-_{\lambda}| ^2 \geq \kappa^2 \delta>0. $
  Then by the continuity of $w^-_{\lambda},$  we get $ \Sigma_{\lambda}^- = \emptyset $ from $ \mathcal{L}^n (\Sigma_{\lambda}^-) =0 .$
 In fact, we can use  $ \int_{\Sigma_{\lambda}^-} |\nabla w^-_{\lambda} |^2 =0 $ to derive $ w^-_{\lambda} =0.$ 
 Anyway we have $ \Sigma_{\lambda}^- = \emptyset $ and $ w_{\lambda}=u_{\lambda} -u \geq 0 $ for $\lambda$ negative enough.\\
 \textbf{Step 2:} Move the plane to the limiting position.\\
 \indent Define $\lambda_0:= \sup\big\{ \lambda | w_{\mu} \geq 0 \; \text{for all} \; \mu \leq \lambda \big\}.$ Then by the same argument as in Step 1 from 
 the right direction $x_1 =+\infty,$ we see $\lambda_0 < +\infty.$ Now, we prove $w_{\lambda_{0}} =u_{\lambda_{0}} -u=0 $ to get the axial symmetry.
 We show this by contracdiction. If not, we will prove there exists an $\epsilon >0$ small enough, such that for all
  $\lambda\in (\lambda_0,\lambda_0 +\epsilon),$
 we still have have $w_{\lambda} \geq 0,$ which will be contradicted with definition of $\lambda_0.$\\
\indent Suppose now $w_{\lambda_0} \not \equiv 0.$ Then $w_{\lambda_0} \geq 0$ and $ w_{\lambda_0}(x_0) >0$ for some $x_0 \in \Sigma_{\lambda_0}.$
 By the integral representation of $L_{\lambda_0}$ and $M_{\lambda_0},$ we see $ L_{\lambda_0}<0 ,$  $ M_{\lambda_0} <0 $ strictly. 
 Then by the Euler-Lagrange equation of $w_{\lambda_0}$, we have 
 $$  -\Delta w_{\lambda} + w_{\lambda} + \frac{1}{2}(p-1) H(x) |\psi_{\lambda}|^{p-2} w_{\lambda} =
- \frac{1}{2} L_{\lambda} |u_{\lambda}|^{p-2} u_{\lambda}   -  \frac{1}{2} M_{\lambda} |u_{\lambda}|^{p-2} u_{\lambda} >0.
    $$
 From this, by the maximun principle, we get $ w_{\lambda_0}>0 $ in $\Sigma_{\lambda_0},$ and $ \frac{\partial u}{\partial x_1} >0 $ along 
 the $-x_1$ direction of the ray.
 Now taking $R$ large enough such that $H(x)$ large enough in $\mathbb{R}^2_+ \setminus B_{R}(0).$
 For $\Sigma_{\lambda_0} \cap B_R, $ we have $ w_{\lambda_0} >0. $ Then by the continuity of $w_{\lambda}(x)=w(\lambda,x),$
 there is an $ \epsilon >0$ small enough, such that $w_{\lambda}\big|_{\Sigma_{\lambda}\cap B_R} >0 $ 
 for all $\lambda \in (\lambda_0,\lambda_0 + \epsilon),$ which give us $w^-_{\lambda} =0.$
 Then we estimate the integrals in \eqref{eq4.4} as in the Step 1, we have 
 \begin{align*}
 \int_{\Sigma_{\lambda}^-} |\nabla w^-_{\lambda} |^2  + \int_{\Sigma_{\lambda}^-} |w^-_{\lambda}| ^2  
 \leq \frac{1}{2} \int_{\Sigma_{\lambda}^- \cap B_R^c}  L_{\lambda}^+    |u_{\lambda}|^{p-2} u_{\lambda} w^-_{\lambda} +
 \frac{1}{2}  \int_{\Sigma_{\lambda}^- \cap B_R^c}  M_{\lambda}^+    |u_{\lambda}|^{p-2} u_{\lambda} w^-_{\lambda}.
 \end{align*}
 All the estimates are exactly the same as in Step 1, except the integrals are over $ \Sigma_{\lambda}^- \cap B_R^c .$
 Now taking $R \longrightarrow \infty$ in place of $\lambda \longrightarrow -\infty $ in Step 1, we  again get
 $$  \int_{\Sigma_{\lambda}^-} |\nabla w^-_{\lambda} |^2  + \int_{\Sigma_{\lambda}^-} |w^-_{\lambda}| ^2 \leq \frac{1}{4}
 \big\|w^-_{\lambda} \big\|^2_{L^2(\Sigma_{\lambda}^- \cap B^c_R)}.  $$
 From this, we get $\Sigma^-_{\lambda} = \emptyset$ hence $ w_{\lambda} =u_{\lambda} -u \geq 0 $ for all $ \lambda \in (\lambda_0,\lambda_0+\epsilon),$
 a contradiction with the definition of $\lambda_0.$ So we have $w_{\lambda_0} \equiv 0$ and $ \frac{\partial u}{\partial x_1} <0 $
 along the ray starting from the axis in $x_1$ direction. 
\end{proof}
\par\noindent
\par\noindent
{\bf{Acknowledgement:}} Y. Zhang was supported by the Postdoctoral Scientific Research Foundation of Central South University.



\begin{thebibliography}{99}
\bibitem{Acker1}  N. Ackermann,  {\it A nonlinear superposition principle and multibump solutions of periodic
Schr\"{o}dinger equations}, J. Funct. Anal. \textbf{234}(2006), no. 2, 277-320. 
 Math. Z., \textbf{248} (2004), no. 2,  423-443

\bibitem{Acker2}  N. Ackermann,  {\it On a periodic Schr\"{o}dinger equation with nonlocal superlinear part},  
 Math. Z., \textbf{248} (2004), no. 2,  423-443

\bibitem{BCS} D. Bonheure, S. Cingolani and J. V. Schaftingen, {\it The logarithmic Choquard equation: Sharp asymptotics and nondegeneracy of the groundstate}, J. Funct. Anal., \textbf{272} (2017), no. 12, 5255-5281.

\bibitem{B} Haim Brezis, {\it Functional analysis, Sobolev spaces and partial differential equations}, Universitext. Springer, New York, 2011.

\bibitem{BJ} J. Byeon and L. Jeanjean,  {\it Standing waves for nonlinear Schr\"{o}dinger equations
with a general nonlinearity},  Arch. Ration. Mech. Anal.,\textbf{185} (2007), no. 2, 185-200.

\bibitem{DZ} D. Cao, W. Dai and Y. Zhang,  {\it Existence and symmetry of positive solutions to 2-D Schr\"{o}dinger-Newton equations}, preprint, submitted for publication, 2019, 28pp

\bibitem{BS} L. Battaglia and J. V. Schaftingen, {\it Groundstates of the Choquard equations with a sign-changing self-interaction potential}, Z. Angew. Math. Phys., \textbf{69} (2018): 86.

\bibitem{CLO} W. Chen, C. Li and B. Ou, {\it Classification of solutions for an integral equation}, Comm. Pure Appl. Math., \textbf{59} (2006), 330-343.

\bibitem{CSV} P. Choquard, J. Stubbe and M. Vuffray, {\it Stationary solutions of the Schr\"{o}dinger-Newton model - an ODE approach}, Differential Integral Equations, \textbf{21} (2008), no. 7-8, 665-679.

\bibitem{CW} S. Cingolani and T. Weth, {\it On the planar Schr\"{o}dinger-Poisson system}, Ann. Inst. H. Poincar\'{e} Anal. Non Lin\'{e}aire, \textbf{33} (2016), no. 1, 169-197.

\bibitem{DQ} W. Dai and G. Qin, {\it Classification of nonnegative classical solutions to third-order equations}, 
Adv. Math., \textbf{328} (2018), 822-857.

\bibitem{DW} M. Du and T. Weth, {\it Ground states and high energy solutions of the planar Schr\"{o}dinger-Poisson system}, Nonlinearity, \textbf{30} (2017), no. 9, 3492-3515.

\bibitem{GS} M. Ghimenti and J. Van Schaftingen, {\it Nodal solutions for the Choquard equation},
J. Funct. Anal., \textbf{271} (2016), no. 1,  107-135.

\bibitem{GNN} B. Gidas, W. Ni and L. Nirenberg, {\it Symmetry and related properties via maximum principle}, Comm. Math. Phys., \textbf{68} (1979), 209-243.

\bibitem{LW} G. Li and C. Wang, {\it The existence of a nontrivial solution to a nonlinear
elliptic problem of linking type without the Ambrosetti-Rabinowitz condition}, 
 Ann. Acad. Sci. Fenn. Math., \textbf{ 36} (2011), no. 2,  461-480.

\bibitem{LiZhu} Y. Li and M. Zhu. {\it Uniqueness theorems through the method of moving
spheres},  Duke Math. J., \textbf{ 80}(1995), no. 2,  383-417.

\bibitem{L} E. H. Lieb, {\it Existence and uniqueness of the minimizing solution of Choquard's nonlinear equation}, Studies in Appl. Math., \textbf{57}(1976/77),  no. 2, 93-105.

\bibitem{LL} E. H. Lieb and M. Loss, {\it Analysis}, Second edition, Graduate Studies in Mathematics, \textbf{14}, American Mathematical Society, Providence, RI, 2001.

\bibitem{Lions} P.L. Lions,  {\it The Choquard equation and related questions},  Nonlinear Anal.,
\textbf{4} (1980), no. 6, 1063-1072.

\bibitem{MZ} L. Ma and L. Zhao, {\it Classification of positive solitary solutions of the nonlinear Choquard equation}, Arch. Rational Mech. Anal., \textbf{195} (2010), no. 2, 455-467.

\bibitem{MS1} V. Moroz and J. Van Schaftingen, {\it Ground states of nonlinear Choquard equations: existence, qualitative properties and decay asymptotics}, J. Funct. Anal., \textbf{265} (2013), no. 2, 153-184.

\bibitem{MS2} V. Moroz and J. Van Schaftingen, {\it A guide to the Choquard equation }, J. Fixed Point Theory Appl., \textbf{19} (2017), no. 1, 773-813.

\bibitem{Palais} R.S. Palais,  {\it The principle of symmetric criticality},Commun. Math.Phys., \textbf{69} (1979), 19-30.

\bibitem{Pekar} S. I. Pekar, {\it Untersuchungen \"{u}ber die Elektronentheorie der Kristalle}, Akademie-Verlag, Berlin, 1954.

\bibitem{P} R. Penrose, {\it On gravity's role in quantum state reduction}, Gen. Relativ. Gravit., \textbf{28} (1996), no. 5, 581-600.

\bibitem{R} D. Ruiz, {\it The Schr\"{o}dinger-Poisson equation under the effect of a nonlinear local term}, J. Funct. Anal., \textbf{137} (2006), 655-674.

\bibitem{S} M. Struwe, {\it Variational methods. Applications to nonlinear partial differential equations and Hamiltonian systems}, Second edition, Springer-Verlag, Berlin, 1996.

\bibitem{Stubbe} J. Stubbe, {\it Bound states of two-dimensional Schr\"{o}dinger-Newton equations}, arXiv:0807.4059, 2008.

\bibitem{WX}   J. Wei and X.Xu. {\it Classification of solutions of higher order conformally
invariant equations}, Math. Ann.,  \textbf{313}(1999), no,. 2,  207-228.

\bibitem{W} Michel Willem, {\it Minimax Theorems}, Birkh\"{a}user Boston, 1996.
\end{thebibliography}
\end{document}